\definecolor{rltblue}{rgb}{0,0,0.4}
\definecolor{drkgreen}{rgb}{0,0.4,0}
\definecolor{drkred}{rgb}{0.5,0,0}
\newtheorem{thm}{Theorem}[section]
\newtheorem{lemma}[thm]{Lemma}
\newtheorem{theorem}[thm]{Theorem}
\newtheorem{corollary}[thm]{Corollary}
\theoremstyle{definition}
\newtheorem{definition}[thm]{Definition}
\theoremstyle{remark}
\newtheorem{historic}[thm]{Historic Remark}
\theoremstyle{plain}
\newcounter{contenumi}
\def\leqt{\leq_T}
\def\geqt{\geq_T}
\def\teq{\equiv_T}
\def\upto{\mathop{\upharpoonright}}
\def\and{\mathrel{\&}}
\def\isom{\cong}
\def\Si{\Sigma}
\newcommand\rightdate[1]{\footnotetext{  Saved: #1 \\ Compiled: \today}}
\def\A{\mathcal{A}}
\def\B{\mathcal{B}}
\def\C{\mathcal{C}}
\def\om{\omega}
\def\bbar{\bar{b}}
\def\si{\sigma}
\def\b{\beta}
 \newcommand{\BFI}[1]{\ensuremath{ \textbf{bf}_{#1}}} 
\def\a{\alpha}
\def\b{\beta}
\def\ZZ{{\mathbb Z}}
\def\QQ{{\mathbb Q}}
\def\M{{\mathcal M}}
\def\A{\mathcal A}
\def\L{\mathcal L}
\def\H{\mathcal H}
\def\xbar{\bar{x}}
\def\cbar{\bar{c}}
\def\ybar{\bar{y}}
\def\abar{\bar{a}}
\def\bbar{\bar{b}}
\def\xbar{\bar{x}}
\def\dbar{\bar{d}}
\def\ctt{{\mathtt c}}
\def\itt{{\mathtt {in}}}
\def\Sic{\Si^\ctt}
\def\Pic{\Pi^\ctt}
\newcommand{\Sico}[1]{\Si^{\ctt,#1}}
\newcommand{\Pico}[1]{\Pi^{\ctt,#1}}
\def\Sii{\Si^\itt}
\def\Pii{\Pi^\itt}
\def\Cfr{{\mathfrak C}}
\def\KK{{\mathbb K}}
\def\BB{{\mathbb B}}
\def\ext{ext}
\def\tp{\text{-}tp}
\def\g{\gamma}
\def\d{\delta}
\def\fin{{\textit{fin}}}
\def\Si{\Sigma}
\def\Sp {\mathop{\mathrm{Sp}}}
\def\om{\omega}
\def\Sp{Sp}
\def\implies{\Rightarrow}
\def\Bb{\mathbb{B}}
\def\Cc{\mathbb{C}}
\def\Cfr{\mathfrak{C}}
\def\Kfr{\mathfrak{K}}
\def\bfphi{\boldsymbol{\varphi}}
\def\bfpsi{\boldsymbol{\psi}}
\title{A computability theoretic equivalent to Vaught's conjecture}
\author{Antonio Montalb\'an}
\thanks{The author was partially supported by NSF grant DMS-0901169 and the Packard Fellowship.
The author would also like to thank Asher Kach for useful comments on an earlier draft of this paper.}
\address{Department of Mathematics\\
University of Chicago\\
5734 S. University ave.\\
Chicago, IL 60637, USA}
\email{antonio@math.uchicago.edu}
\urladdr{\href{http://www.math.uchicago.edu/~antonio/index.html}{www.math.uchicago.edu/$\sim$antonio}}
\begin{document}

\rightdate{August 22, 2012 (Submitted version but with new title)}
\maketitle


%

\begin{abstract}
We prove that, for every theory $T$ which is given by an ${\mathcal L}_{\omega_1,\omega}$ sentence, $T$ has less than $2^{\aleph_0}$ many countable models if and only if we have that, for every $X\in 2^\omega$ on a cone of Turing degrees, every $X$-hyperarithmetic model of $T$ has an $X$-computable copy.
We also find a concrete description, relative to some oracle, of the Turing-degree spectra of all the models of a counterexample to Vaught's conjecture.
\end{abstract}

\section{Introduction}

The main result of this paper shows that Vaught's conjecture can be viewed as a computability theoretic question. 
Vaught's conjecture states that the number of countable models of a first-order theory is either countable or continuum.
It was conjectured in 1961 \cite{Vau61}, and is one of the longest-standing, best-known, open questions in mathematical logic.
Though phrased as a  question in model theory,  it is related to other areas of logic too.
For instance, it can be viewed as a descriptive set theoretic question; the topological Vaught's conjecture is a well-known strengthening of Vaught's conjecture which is  purely descriptive set theoretic in nature.
The idea that it is also connected to notions from  higher computability theory has been around for some time too.
Gerald Sacks has done much work on this approach (see, for instance, \cite{Sac83, Sac07}).
In this paper we give a precise computability theoretic formulation of Vaught's conjecture.

We  state our main theorem, describing what each of the items means afterwards. 

\begin{theorem} (ZFC + PD)  \label{thm: main}
Let $T$ be a $\L_{\om_1,\om}$-sentence with uncountably many countable models.
The following are equivalent:
\begin{enumerate}\renewcommand{\theenumi}{V{\arabic{enumi}}}
\item $T$ is a counterexample to Vaught's conjecture.   \label{main: part 1}
\item $T$ satisfies hyperarithmetic-is-recursive on a cone.     \label{main: part 2}
\item There exists an oracle relative to which,     \label{minimal main: part 2} \label{main: part 3}
\[
\{\Sp(\A): \A\models T\}\  =\  \{\{X\in 2^\om: \om^X_1\geq \a\}: \a \in\om_1 \}.
\]
\end{enumerate}
\end{theorem}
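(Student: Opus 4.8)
The plan is to run the cycle (V1) $\Rightarrow$ (V2) $\Rightarrow$ (V3) $\Rightarrow$ (V1), working throughout relative to an oracle that computes a presentation of $T$ together with its back-and-forth analysis, and invoking Martin's cone theorem (legitimate for projective sets under PD) to convert statements of the form ``holds for every $X$ above some base'' into ``holds on a cone.'' The organizing dichotomy I would use is the effective form of Morley's theorem applied to the Scott analysis of $T$: since $T$ has uncountably many countable models, either (i) $T$ is scattered, so the back-and-forth types at each countable level are countably many and the Scott ranks of models of $T$ are cofinal in $\om_1$; or (ii) $T$ is non-scattered, so at some level there is a perfect set of back-and-forth types, yielding a perfect set of pairwise non-isomorphic models and hence $2^{\aleph_0}$ of them. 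Case (i) is precisely (V1).

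For (V1) $\Rightarrow$ (V2) I would assume $T$ scattered and fix $X$ on a suitable cone. Given an $X$-hyperarithmetic model $\A \models T$, Nadel's theorem bounds its Scott rank below $\om_1^X$, so the entire back-and-forth analysis relevant to $\A$ is visible below $\om_1^X$; scatteredness makes the associated tree of types well-founded with $X$-computable rank on the cone, so the stabilized types realized by $\A$ can be enumerated $X$-computably. A Henkin/back-and-forth construction driven by these types then produces an $X$-computable copy of $\A$. I expect the main obstacle to sit here: one must show, via a $\Sigma^1_1$-boundedness argument combined with Martin's theorem, that the level at which types stabilize is uniformly below $\om_1^X$ on a cone, so that the reconstruction is genuinely $X$-computable rather than merely $X$-hyperarithmetic.

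For (V2) $\Rightarrow$ (V3) I would work above the base of the cone supplied by (V2) and show that each $\A \models T$ satisfies $\Sp(\A) = \{X : \om_1^X \geq \a_{\A}\}$, where $\a_{\A}$ is (essentially) the Scott rank of $\A$. The inclusion $\subseteq$ is Nadel's theorem again: any $X$ computing a copy of $\A$ has $\om_1^X$ above its Scott rank. The inclusion $\supseteq$ is exactly where the hyperarithmetic-is-recursive property is used: if $\om_1^X \geq \a_{\A}$ then the Scott analysis of $\A$ is coded below $\om_1^X$, so $\A$ has an $X$-hyperarithmetic copy, whence by (V2) an $X$-computable copy, i.e. $X \in \Sp(\A)$. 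Finally, since in case (i) the Scott ranks range cofinally through $\om_1$, and since by Sacks' theorem the distinct sets $\{X : \om_1^X \geq \a\}$ are indexed by the countable admissible ordinals, I would check (absorbing the countably many small ordinals below the base into the oracle) that the spectra realized are exactly $\{\{X : \om_1^X \geq \a\} : \a \in \om_1\}$.

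For (V3) $\Rightarrow$ (V1) I would argue by contraposition: if $T$ is not a counterexample then, having uncountably many models, it falls into case (ii) and carries a perfect set of models. By the standard coding of reals into degree spectra, such a family realizes spectra of many distinct characters; in particular one can arrange a model whose spectrum is not invariant under hyperarithmetic equivalence (for instance by coding a suitable cone $\{X : X \geqt Y\}$), whereas every set $\{X : \om_1^X \geq \a\}$ is hyp-invariant because $\om_1^X$ depends only on the hyperarithmetic degree of $X$. Hence no choice of oracle can make the spectra of $T$ coincide with the prescribed family, so (V3) fails. Note this uses the character of the sets rather than mere cardinality, so the argument survives under CH. This closes the cycle and establishes the theorem.
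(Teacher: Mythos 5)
There is a genuine gap, and it sits exactly where you predicted the ``main obstacle'' would be --- but your sketch resolves it with a false premise. In your step (V\ref{main: part 2}) you assert that Nadel's theorem bounds the Scott rank of an $X$-hyperarithmetic model below $\om_1^X$, so that ``the entire back-and-forth analysis relevant to $\A$ is visible below $\om_1^X$.'' The correct bound (Theorem \ref{thm: effective Scott} and its consequences) is $SR(\A)\leq \om_1^X+1$, and this maximum is attained: there are models of \emph{high Scott rank}, with $SR(\A)=\om_1^\A$ or $\om_1^\A+1$. Worse, by Sacks's theorem (quoted in Section \ref{se: omega 1 CK}) a scattered theory with \emph{no} models of high Scott rank has only countably many models, so a counterexample to Vaught's conjecture is forced to have such models cofinally in the admissible ordinals (Lemma \ref{lemma: Sacks 4.7}). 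For these models the Scott analysis is \emph{not} visible below $\om_1^X$, the tree of types does not stabilize at a level computable from $X$, and the $\Si^1_1$-boundedness/Martin argument you invoke gives only the low-rank case --- which is essentially Theorem \ref{thm: easy version} and is explicitly noted in the paper to be insufficient. Handling high Scott rank is the actual content of the hard direction: it requires computing the $\b$-bf-structures on a cone, an overspill argument producing a nonstandard ``$\a^*$-bf-structure'' inside an $\om$-model of ZFC omitting $\om_1^Z$, building a model of prescribed nonstandard bf-type via Lemma \ref{lemma: building models}, and then identifying it with $\A$ via Theorem \ref{thm: effective Scott}. None of this is replaceable by the argument you describe.

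A second, related gap is in (V\ref{main: part 2})$\Rightarrow$(V\ref{main: part 3}): you claim that $\om_1^X\geq\a_\A$ implies $\A$ has an $X$-hyperarithmetic copy, ``since the Scott analysis of $\A$ is coded below $\om_1^X$.'' For arbitrary structures this is simply false --- a structure of small Scott rank can code a real $R$ so that its spectrum is the cone above $R$, and then no $X$ with small $\om_1^X$ but $X\not\geqt R$ has a hyperarithmetic copy. Establishing it for models of a scattered $T$ is not a corollary of (V\ref{main: part 2}); it is essentially equivalent to the full strength of Theorem \ref{thm: counter example}, which is why the paper proves (V\ref{main: part 1})$\Rightarrow$(V\ref{main: part 3}) directly and obtains (V\ref{main: part 2}) as the trivial consequence, closing the cycle with the coding argument (V\ref{main: part 2})$\Rightarrow$(V\ref{main: part 1}) of Section \ref{se: coding}. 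Your contrapositive for the last implication is in the right spirit (it parallels that coding argument, and the observation that the sets $\{X:\om_1^X\geq\a\}$ are hyperarithmetically invariant while coded spectra need not be is a legitimate way to finish), but as written the claim that one can realize a prescribed cone as a spectrum needs the perfect-tree-of-types construction spelled out at the level of the $\a$-th jump.
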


The theorem is stated not only for finitary first-order theories, but for $\L_{\om_1,\om}$ sentences, where $\L_{\om_1,\om}$ is the language where countable infinitary disjunctions and infinitary conjunctions are allowed.
In this paper, when we refer to Vaught's conjecture, we will refer to Vaught's conjecture for $\L_{\om_1,\om}$ sentences.
Given the techniques we use here, our results apply to both infinitary or finitary theories.

On (\ref{main: part 1}), when the continuum hypothesis (CH) does not hold, by a {\em counterexample to Vaught's conjecture} we mean an $\L_{\om_1,\om}$ sentence $T$ which has uncountably many countable models but not continuum many.
In Definition \ref{def: scattered} below we will define a counterexample to Vaught's conjecture to be a scattered theory with uncountably many models, which is equivalent to the definition above under $\neg$CH, and still makes sense with or without CH.

To describe (\ref{main: part 2}), we need the following new definition.

\begin{definition}
We say that a class of structures $\KK$ satisfies {\em hyperarithmetic-is-recursive} if every hyperarithmetic structure in $\KK$ has a computable copy.
\end{definition}

This is true for the class of well-orders (as proved by Spector \cite{Spe55}).
The other classes we know, as for example the class of super-atomic Boolean algebras, are, in some sense, imitations of the class of well-orders.
None of these examples is  axiomatizable by an $\L_{\om_1,\om}$ sentence.
However, we can get other examples if we replace isomorphism by other equivalence relations.
For instance, the author \cite{MonEqui} showed  that every hyperarithmetic linear ordering is bi-embeddable with a computable one.
Greenberg and the author \cite{GMRanked} showed the same for $p$-groups.
In Section \ref{se: hyp-is-rec} we give a general theorem that encapsulates all these examples, but that only works relative to a cone.
We say that $\KK$ satisfies {\em hyperarithmetic-is-recursive on a cone} if there exists $Y$ such that for every $X\geqt Y$, every $X$-hyperarithmetic structure in $\KK$ has an $X$-computable copy.

The third item, (\ref{main: part 3}), fully describes the degree-spectrum of all the models of $T$, relative to an oracle.
The spectrum of a structure is widely used as a way of measuring the computational complexity of a structure, and much research has been done on it.
The {\em spectrum of a structure $\A$}, $\Sp(\A)$, is the set of all $X\in 2^\om$ which can compute a copy of $\A$.
Given $X\in 2^\om$, $\om_1^X$ is the least ordinal which has no copy computable in $X$.
So, for instance, if the structure $\A$ is a well-ordering of order type $\b$, then $\Sp(\A)=\{X\in 2^\om: \om_1^X > \b\}$.
It is easy to see that (\ref{main: part 3}) implies (\ref{main: part 2}).

We remark that we do know $\Si^1_1$ classes $\KK$ of structures with 
\[
\{\Sp(\A): \A\in \KK\} = \{\{X\in 2^\om: \om^X_1\geq \a\}: \a \in\om_1 \},
\]
such as, for instance, the class of linear orderings $\KK= \{\ZZ^\a\cdot \QQ: \a\in\om_1\}$.
 (This is Kunen's example, see \cite[1.4.3]{Ste78}.)
On the other hand, the class of ordinals does not have this family of spectra because if  $\a$ is both admissible and a limit of admissibles, then no ordinal has spectrum $\{X\in 2^\om: \om^X_1\geq \a\}$.
We  do not know of any $\Pi^1_1$ class of structures with exactly that family of spectra.

We also note that, essentially, the only structures we know with spectrum $\{X:\om_1^X>\om_1^{CK}\}$ are $\om_1^{CK}$ itself, all ordinals in between $\om_1^{CK}$ and $\om_2^{CK}$, and the linear ordering $\om_2^{CK}\cdot(1+\QQ)$.
All the other examples we know are produced out of these in one way or another.

Theorem \ref{thm: main} is not proved within ZFC, since some amount of determinacy is needed in the proof.
Projective determinacy (PD) is enough for our proofs, but much less should be enough too.
Projective determinacy is the statement that every projective set is determined, and it follows from the existence of infinitely many Woodin cardinals (Martin and Steel \cite{MS89}).
We do not know whether the results of this paper can be obtained in ZFC.

One of the main tools applied in this paper is Martin's turing determinacy, that says that every Turing-degree-invariant set of reals either contains a cone or is disjoint from a cone of Truing degrees.
This is where  projective determinacy is needed, and the oracle relative to which parts (\ref{main: part 2}) and (\ref{main: part 3}) are proved come from strategies in projective games.
(See Section \ref{se: martins} for more background information.)

An important notion used in this paper is the {\em back-and-forth structure} of a class of structures.
The careful study of this structure was started by the author in \cite{McountingBF}.
We review all the necessary background in Section \ref{se: bf}.
It is worth mentioning that these techniques are not necessary if the objective of the reader is to prove Vaught's conjecture.
If one managed to prove that no theory can have a class of spectra as in (\ref{main: part 3}), Vaught's conjecture would then follow using only the results in Section \ref{se: hyp-is-rec}.

This paper is organized as follows.
In Section \ref{ss: background} we quickly review the basic background necessary for the rest of the paper.
We start Section \ref{se: hyp-is-rec} by reviewing Martin's theorems and then we use them  to prove a general result (Theorem \ref{thm: easy version}) saying that certain equivalence relations satisfy hyperarithmetic-is-recursive.
In Section \ref{se: minimal} we show that if $T$ is a minimal counterexample to Vaught's conjecture, there exists an oracle relative to which, all models $\A$ of $T$ satisfy that $\Sp(\A)=\{Z\in 2^\om: \om_1^\A\leq\om_1^Z\}$ (Theorem \ref{thm: minimal example}).
It will then easily follow that all such $T$ satisfy (\ref{main: part 3}).
To prove Theorem \ref{thm: minimal example} we will develop some of the lemmas that we will need later.
We prove Theorem \ref{thm: minimal example} separate from our main theorem because its proof does not need the material in Sections \ref{se: bf} and \ref{se: main}, and only uses Theorem \ref{thm: easy version} together with known results from higher recursion theory.
Thus, if what the reader wants is to use our results to prove Vaught's conjecture, there is no need to go through Sections \ref{se: bf} and \ref{se: main}.
In Section \ref{se: bf} we develop the $\a$-back-and-forth structures, extending ideas from the author's previous work in \cite{McountingBF}.
These structures provide an important tool to prove our main results in Section \ref{se: main}.

\subsection{Background and Notation}  \label{ss: background}

We  deal only with countable structures.
For now on, by structure, we mean countable structure.

\subsubsection{Infinitary languages}

We will use the infinitary language $\L_{\om_1,\om}$ and its effective version $\L^c_{\om_1,\om}$ throughout this paper.
We refer the reader to \cite[Chapters 6 and 7]{AK00} for background on infinitary formulas.
Without loss of generality, we will assume $\L$ is a relational computable language.
An $\L_{\om_1,\om}$ formula is {\em computable} if all the infinite disjunctions and conjunctions are  over computably enumerable sets of formulas.
When we count alternations of quantifiers, we now count infinitary disjunctions as existential quantifiers and infinitary conjunctions as universal quantifiers.
This way we get hierarchies $\Sii_\a$ and $\Pii_\a$ of infinitary formulas with $\a$ ranging in $\om_1$, and $\Sic_\a$ and $\Pic_\a$ of computable infinitary formulas with $\a$ ranging in $\om_1^{CK}$.
We use $\L^c_{\om_1,\om}$ for the whole set of computably infinitary formulas, and $\Sic_{<\a}$ for $\bigcup_{\b<\a}\Sic_\b$.
At times we will consider computable infinitary formulas relative to some oracle $X$, in which case we allow the infinite disjunctions and conjunctions to be $X$-computably enumerable.
We denote the relativized hierarchies of formulas by $\Sico{X}_\a$, $\Pico{X}_\a$, and $\L^{c,X}_{\om_1,\om}$.
Note that every $\Sii_\a$ formula is $\Sico{X}_\a$ for some $X$.

\subsubsection{Scott Rank}

Fix a structure $\A$.
Given a tuple $\abar$ in $\A$, we define $\rho_\A(\abar)$ to be the least ordinal $\a$ such that if all $\Pii_\a$ formulas true of $\abar$ are true of another tuple $\bbar$, then all $\L_{\om_1,\om}$ formulas true of $\abar$ are true of $\bbar$.
The {\em Scott rank} of $\A$ is then given by $SR(\A)= \sup\{\rho(\abar)+1: \abar\in\A\}$.
This function $SR$  is called $R$ in \cite[Section 6.7]{AK00}.

Scott's isomorphism theorem \cite{Sco65} then states that there is a $\Pii_{SR(\A)+2}$-sentence which is true in $\A$ but not in any other countable structure. 
In particular,  if two countable structures are $\L_{\om_1,\om}$-elementary equivalent,  they are isomorphic.
The effective version of this latter statement holds as well: 

\begin{theorem}[{Nadel \cite{Nad74} (see also \cite[Theorem 7.3]{Bar75})}]\label{thm: effective Scott}
If two $X$-computable structures are $\L^{c,X}_{\om_1,\om}$-elementary equivalent, they are isomorphic.
\end{theorem}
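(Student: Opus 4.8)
The plan is to prove Nadel's theorem (Theorem \ref{thm: effective Scott}) by a standard back-and-forth argument, but carefully verified to be computable relative to $X$. Let $\A$ and $\B$ be $X$-computable structures that are $\L^{c,X}_{\om_1,\om}$-elementarily equivalent. The key observation is that $X$-computable structures have a small ordinal bound on their complexity: since the atomic diagram of an $X$-computable structure is $X$-computable, every tuple in $\A$ or $\B$ is definable at levels of the infinitary hierarchy below $\wck$ relativized to $X$, that is below $\om_1^{X}$. Thus all the back-and-forth relations we need to consider are indexed by ordinals $\a<\om_1^X$, and at each such level the relevant formulas are $X$-computable infinitary formulas, i.e.\ lie in the hierarchy $\Sico{X}_\a$, $\Pico{X}_\a$.

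First I would set up the family of relations: for tuples $\abar\in\A$ and $\bbar\in\B$ of the same length, say $\abar\equiv^c_X \bbar$ if $\abar$ and $\bbar$ satisfy exactly the same $\L^{c,X}_{\om_1,\om}$ formulas. The hypothesis of the theorem is precisely that the empty tuples satisfy this (elementary equivalence). I would then establish the back-and-forth extension property: if $\abar\equiv^c_X\bbar$ and $a\in\A$ is arbitrary, there is $b\in\B$ with $\abar a\equiv^c_X\bbar b$ (and symmetrically). This is the heart of the matter and uses the effectiveness crucially: the set of $X$-computable formulas true of $\abar$ is closed under $X$-c.e.\ disjunctions and conjunctions, so one can form, for each candidate extension, an $X$-computable infinitary formula expressing the complete type of $\abar a$ up to the relevant level; if no $b$ worked for a given $a$, one could assemble an $X$-computable infinitary formula distinguishing $\abar$ from $\bbar$, contradicting $\abar\equiv^c_X\bbar$. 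The point is that the Scott-rank bound below $\om_1^X$ guarantees the complete $\L_{\om_1,\om}$-type is captured by an $X$-\emph{computable} infinitary formula, so the distinguishing formula lands back in $\L^{c,X}_{\om_1,\om}$ and the contradiction is legitimate.

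With the back-and-forth property in hand, I would build an isomorphism in the usual way: enumerate the elements of $\A$ and $\B$, and alternately extend a growing finite partial map using the extension property on each side, ensuring surjectivity and injectivity in the limit. Since $\abar\equiv^c_X\bbar$ in particular preserves quantifier-free formulas, the resulting bijection is an isomorphism. This yields $\A\cong\B$, as required.

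The main obstacle I expect is the justification that the distinguishing formula produced when the extension property fails is genuinely $X$-computable infinitary and not merely $\L_{\om_1,\om}$. This is exactly where one must invoke that $\A$ and $\B$ are $X$-computable and hence have Scott rank below $\om_1^X$ (a relativization of Nadel's bound), so that types stabilize at a level where the relevant back-and-forth relations are $X$-c.e.\ and the disjunctions/conjunctions one forms are over $X$-c.e.\ index sets. Handling this bookkeeping — verifying that every formula appearing in the argument stays within the relativized computable hierarchy rather than escaping to the full $\L_{\om_1,\om}$ — is the delicate part; the purely combinatorial back-and-forth skeleton is routine.
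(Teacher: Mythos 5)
The paper does not actually prove this statement --- it is quoted as a known theorem of Nadel, with pointers to \cite{Nad74} and \cite[Theorem 7.3]{Bar75} --- so I can only assess your argument on its own terms. Your back-and-forth skeleton is the standard and correct one, but the justification you give for the crucial extension step rests on a false premise. You claim that $X$-computability forces the Scott rank of $\A$ and $\B$ below $\om_1^X$, so that the complete $\L_{\om_1,\om}$-type of each tuple is captured by an $X$-computable infinitary formula, and hence the distinguishing formula you assemble automatically lands in $\L^{c,X}_{\om_1,\om}$. The correct bound is $SR(\A)\leq\om_1^X+1$, and it is attained, e.g.\ by the Harrison ordering of type $\om_1^X\cdot(1+\QQ)$, which is $X$-computable and has Scott rank $\om_1^X+1$. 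The paper itself notes, immediately after the statement, that there are computable structures whose isomorphism type is not determined by any $\L^c_{\om_1,\om}$ sentence; for such structures of high Scott rank the $\L_{\om_1,\om}$-types of tuples are emphatically not all equivalent to computable infinitary formulas, yet these are exactly the structures for which the theorem has real content. Moreover, the bound $SR(\A)\leq\om_1^X+1$ is standardly derived \emph{from} Theorem~\ref{thm: effective Scott} (see Section~\ref{se: omega 1 CK}), so the appeal is circular as well as too strong.

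The missing ingredient is boundedness, not stabilization of types. Suppose $\abar$ and $\bbar$ satisfy the same $\L^{c,X}_{\om_1,\om}$ formulas and the extension property fails at $a\in\A$: for every $b\in\B$ there is some $\a_b<\om_1^X$ with $(\A,\abar a)\not\equiv_{\a_b}(\B,\bbar b)$. The relations $\leq_\a$ between tuples of the two fixed $X$-computable structures are uniformly $\Delta^1_1(X)$ in a notation for $\a$, so by Kreisel selection one obtains a $\Delta^1_1(X)$ choice of witnesses $b\mapsto\a_b$ ranging over notations, and $\Si^1_1$-boundedness then gives a single $\a^*<\om_1^X$ with $(\A,\abar a)\not\equiv_{\a^*}(\B,\bbar b)$ for all $b$ simultaneously. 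That uniform bound is what allows you to write down one $\Pico{X}_{\a^*+1}$ formula, with its conjunction indexed by an $X$-c.e.\ set, separating $\abar$ from $\bbar$ and contradicting the hypothesis. (Equivalently, this is $\Sigma$-reflection in the admissible set $L_{\om_1^X}[X]$, which is how the proof in \cite[Theorem 7.3]{Bar75} is organized.) With that step supplied, the rest of your outline --- the closure of the agreement relation under the back-and-forth extension property and the construction of the isomorphism --- goes through as you describe.
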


However, there are computable structures for which no $\L^c_{\om_1,\om}$ sentence determines their isomorphism type among countable, or even computable, structures.

\subsubsection{Morley's theorem} \label{se: morely}

One of the most important partial results towards Vaught's conjecture is Morley's theorem \cite{Mor70}, which states that the number of countable models of an $\L_{\om_1,\om}$ sentence is either countable, $\aleph_1$, or $2^{\aleph_0}$.
This result can be proved as follows.
Given two structures, we let $\A\equiv_\a\B$ if $\A$ and $\B$ are $\Sii_\a$-elementary equivalent (see Section \ref{se: bf} for more on this relation).
For each $\a<\om_1$, this equivalence is Borel (using Definition \ref{thm: def bnf}), and hence, by Silver's theorem \cite{Sil80}, the number of $\equiv_\a$-equivalence classes among the models of a theory $T$ is either countable or continuum.
Therefore, if  the number of models of $T$ is less than $2^{\aleph_0}$, the number of $\equiv_\a$-equivalence classes must be countable for each $\a$.
By Scott's theorem, for each $\A$ and for $\a=SR(\A)+2$, we have that for any other structure $\B$, if $\B\equiv_\a\A$, then $\B\isom \A$.
Thus, for each given $\b$, the number of models of $T$ of Scott rank $\b$ is at most countable.
It follows that the number of models of $T$ has to be at most $\aleph_1$.

\begin{definition}\label{def: scattered}
A theory $T$ is {\em scattered} if  for each $\a<\om_1$, there are at most countably many $\equiv_\a$-equivalence classes among the models of $T$.
If also $T$ has uncountably many models, we say that $T$ is a {\em counterexample to Vaught's conjecture}.
\end{definition}

\subsubsection{Omega-one-Church-Kleene} \label{se: omega 1 CK}

The least ordinal with no computable presentation is denoted $\om_1^{CK}$.
Given $X\in 2^\om$, we let $\om_1^X$ be the least ordinal with no $X$-computable presentation, or equivalently, with no $X$-hyperarithmetic presentation.
Sacks showed that an ordinal $\a$ is {\em admissible} if and only if it is of the form $\om_1^X$ for some $X$.
The reader might use this as the definition of admissible ordinal throughout this paper.

It can be proved using Theorem \ref{thm: effective Scott} that for every $X$-computable structure $\A$, $SR(\A)\leq \om_1^X+1$, and there are examples where this maximum is attained, like the Harrison linear ordering which has order type $\om_1^X\cdot(1+\QQ)$.
Give a structure $\A$, we let 
\[
\om_1^\A = \min\{\om_1^X: X\in \Sp(\A)\}.
\]
So, we have that 
\[
SR(\A)\leq \om_1^\A+1.
\]
Structures for which $SR(\A)\geq \om_1^\A$ are said to have {\em high Scott rank}.
Most of the difficulty proving the results of this paper comes from dealing  with the structures of high Scott rank.
Sacks \cite{Sac07} showed that if a scattered theory has no models with $SR(\A)= \om_1^\A+1$, then it has countably many models.

We remark that the Scott rank of a structure is not affected by relativization, but $\om_1^\A$ is.
Given $Y\in 2^\om$, we let $\om_1^{\A,Y} =  \min\{\om_1^X: X\in \Sp(\A), X\geqt Y\}$.
Thus, the notion of ``high Soctt rank'' is not invariant under relativization.
We notice, however, that if $\A$ has high Scott rank relative to some $Y$, that is if $SR(\A)\geq \om_1^{\A, Y}$, then it has high Scott rank and $\om_1^{\A, Y}=\om_1^\A$, because $\om_1^{\A, Y}\leq SR(\A) \leq \om_1^\A+1 \leq \om_1^{\A,Y}+1$.

\section{Hyperarithmetic is recursive} \label{se: hyp-is-rec}

In this section we give a sufficient condition for an equivalence class to satisfy hyperarithmetic-is-recursive.
This condition is quite general and includes some interesting examples.
But it is stronger than the hypothesis of our main theorem, which will need a different, more elaborate proof.
The main technique used in this section is Martin's Turing determinacy.

We start with a general definition.

\begin{definition}
A {\em ranked equivalence relation} consist of a set $\Kfr\subseteq 2^\om$, an equivalence relation $\equiv$ on $\Kfr$, and a function $r\colon \Kfr/\equiv \to \om_1$.
We say that a ranked equivalence relation is {\em projective} if $\Kfr$ and $\equiv$ are  projective and $r$ has a presentation $2^\om\to 2^\om$ which is projective.
We say that $(\Kfr,\equiv,r)$ is {\em scattered} if $r^{-1}(\alpha)$ contains only countably many equivalence classes for each $\alpha\in\om_1$.
\end{definition}

The main example of a projective ranked equivalence relation is the isomorphism relation on a class of structures, where the rank function is the Scott rank.
When the class of structures is the class of models of a scattered theory $T$, we have that this ranked equivalence relation is scattered.
We will not be able to get that every scattered projective ranked equivalence relation satisfies hyperarithmetic-is-recursive on a cone, but we will get close.

We use $[X]$ to denote the equivalence class of $X$.

\begin{theorem}\label{thm: easy version}
(ZFC + PD)
Let $(\Kfr,\equiv, r)$ be a scattered projective ranked equivalence relation.
Then, relative to some oracle, for every $Z\in 2^\om$ and every $X\in \Kfr$ we have that if $r(X)<\om_1^Z$, then $Z$ computes a member of $[X]$.
\end{theorem}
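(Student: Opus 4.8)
The plan is to apply Martin's Turing determinacy to the invariant set of ``failures'' and to show that this set cannot contain a cone. Concretely, I set
\[
A = \{Z \in 2^\om : \text{there is } X\in\Kfr \text{ with } r(X)<\om_1^Z \text{ and } Z \text{ computes no member of } [X]\}.
\]
This set is closed under Turing equivalence, since both $\om_1^Z$ and the collection of reals computed by $Z$ are degree invariants, and it is projective, using that $\Kfr$, $\equiv$, and (the presentation of) $r$ are projective, that ``$r(X)<\om_1^Z$'' is projective in $Z$ and a code for $r(X)$, and that ``$W\leqt Z$'' is arithmetic. By Martin's theorem, either $A$ or its complement contains a cone, and the theorem is exactly the assertion that, relative to the base of the complementary cone, every $Z$ succeeds. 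So it suffices to prove that $A$ does not contain a cone.

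Two ingredients are soft and immediate. First, for a single class $[X]$ the set $S_{[X]}=\{Z: Z \text{ computes a member of } [X]\}$ is invariant and projective, and it contains the cone above any fixed member of $[X]$; since a set containing a cone cannot be disjoint from a cone (two cones always meet), Martin's theorem gives that $S_{[X]}$ contains a cone. Second, scatteredness: for each $\alpha<\om_1$ there are only countably many $\equiv$-classes of rank $\alpha$, and a countable intersection of cones contains a cone (join the bases), so there is a single real $d_\alpha$ with the property that every $Z\geqt d_\alpha$ computes a member of every class of rank $\alpha$. Now suppose toward a contradiction that $A$ contains a cone with base $b$, and for $Z\geqt b$ let $g(Z)$ be the least $\alpha<\om_1^Z$ carrying a class not computed by $Z$. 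Then $g$ is degree invariant, satisfies $g(Z)<\om_1^Z$, and is monotone: if $Z\geqt Z'\geqt b$ then $Z$ computes everything $Z'$ does and $\om_1^{Z'}\leq\om_1^Z$, so $g(Z)\geq g(Z')$.

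Apply Martin's theorem to each invariant set $\{Z:g(Z)\leq\gamma\}$. If for some $\gamma_0<\om_1$ this set contains a cone with base $c$, I reach a contradiction: let $W$ code a well-order of type $\gamma_0$ and take any $Z\geqt c\oplus W\oplus\bigoplus_{\alpha\leq\gamma_0} d_\alpha$ (a countable join, hence a real). Then $\om_1^Z>\gamma_0$ and $Z$ computes a member of every class of rank $\leq\gamma_0$; yet $Z\geqt c$ forces $g(Z)\leq\gamma_0$, i.e.\ some class of rank $\leq\gamma_0<\om_1^Z$ is not computed by $Z$ -- a contradiction. Hence no such $\gamma_0$ exists, so for every $\gamma<\om_1$ the set $\{Z:g(Z)>\gamma\}$ contains a cone; in other words $g$ is unbounded on every cone.

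The main obstacle is to rule out this last possibility, and it is genuinely structural: no single oracle can absorb all the bases $d_\alpha$ ($\alpha<\om_1$) at once, their join not being a real, and along any cone $\om_1^Z$ is cofinal in $\om_1$, so a naive transfinite exhaustion that chases $g$ upward need not terminate. What is really needed is a \emph{uniform} form of the per-rank ingredient: a single oracle $G$ such that for every $X\in\Kfr$ and every real $W$ coding a well-order of type $\geq r(X)$, the real $G\oplus W$ computes a member of $[X]$. Granting this, any $Z\geqt G$ with $r(X)<\om_1^Z$ computes such a $W$ and hence a member of $[X]$, so $Z$ computes a member of every class of rank $<\om_1^Z$, contradicting the unboundedness of $g$. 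I expect the proof of this uniform statement to be the crux: it is where the projectivity of $r$ must be used (not merely the existence of per-class cones), via a finer Martin-measure and reflection argument that bounds the complexity of the cone base for rank $\alpha$ in terms of a presentation of $\alpha$ together with a fixed oracle. Everything else follows softly from Turing determinacy and scatteredness.
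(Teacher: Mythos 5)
Your setup and first reduction are exactly the paper's: you pass to the invariant projective set of failing $Z$, assume by Turing determinacy that it contains a cone, and define the invariant ordinal function $g(Z)$ (the paper's $\a_Z$) picking out the least offending rank below $\om_1^Z$. But you stop precisely at the crux and leave it open: you show $g$ is unbounded on every cone and then say that ruling this out requires a ``uniform'' computability statement whose proof you only sketch as an expectation. That unproved uniform statement is not actually needed, and as stated it is not how the argument closes.

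The missing idea is a boundedness lemma for degree-invariant ordinal functions (Lemma \ref{lemma: Martin ordinals} in the paper): if $f\colon 2^\om\to\om_1$ is invariant under Turing equivalence, has a projective presentation, and satisfies $f(Z)<\om_1^Z$ for every $Z$, then $f$ is constant on a cone. The proof is short but uses one more trick than you deploy: since $f(Z)<\om_1^Z$, for each $Z$ there is an index $e$ with $\{e\}^Z$ a well-ordering isomorphic to $f(Z)$; as there are only countably many indices, some single $e$ works on a cofinal set of $Z$; Martin's pointed-tree lemma (Lemma \ref{lemma: Martin pointed}) upgrades this to a pointed tree $P$ on which $\{e\}^X\isom f(X)$; then $Y\mapsto\{e\}^{P(Y)}$ is a continuous map into the well-orderings, so $\Si^1_1$-boundedness bounds $f$ below some fixed $\a<\om_1$ on the cone above $P$, and one more application of Turing determinacy makes $f$ constant there. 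Applied to your $g$, constancy at some $\a$ contradicts your (correct) observation that, by scatteredness, a sufficiently high $Z$ computes members of all the countably many classes of rank $\leq\a$. So the soft parts of your argument are fine, but without the index-counting plus $\Si^1_1$-boundedness step the proof does not close; the behaviour you call ``genuinely structural'' is exactly what this lemma forbids.
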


This theorem alone is not enough to get that every scattered theory satisfies hyperarithmetic-is-recursive because, if a model $\A$ has high Scott rank, there will be a real $Z$ such that $\A$ is hyperarithmetic in $Z$ and with $SR(\A)\geq\om_1^Z$.

Theorem \ref{thm: easy version} is, however, strong enough to be applied in the following examples.
The first trivial example of a scattered projective ranked equivalence relation is the isomorphism relation on the class of countable ordinals, where the rank function is the identity.
Spector proved in \cite{Spe55} that every hyperarithmetic well-ordering has a computable copy.

A more interesting example is the relation of bi-embeddability on the class of countable linear orderings.
 (A linear ordering is {\em scattered} if it has no sub-ordering isomorphic to the rationals.)
 On the class of scattered linear orderings, we can use the Hausdorff rank as our ranking function.
 (The {\em Hausdorff rank} of a linear ordering $\L$ is the least $\a$ such that $\a$ embeds into a proper segment of $\ZZ^{\a+1}$. 
 This definition might differ slightly  from other  definitions in the literature.)
 The fact that there are only countably many equivalence classes of each Hausdorff rank follows from the work of Laver \cite{Lav71}.
 This is a scattered projective ranked equivalence relation.
 It is well known that the Hausdorff rank of a scattered linear ordering $\L$ is always less than $\om_1^\L$.
It follows from the theorem above that, for every oracle on a cone, every hyperarithmetic scattered linear ordering is bi-embeddable with a computable one.
Since every non-scattered linear ordering is bi-embeddable with $\QQ$, (which is computable), this is true for all linear orderings.
The author showed in \cite{MonEqui, MonBSL} that every hyperarithmetic linear ordering is bi-embeddable with a computable one relative to every oracle.
Getting the base of the cone to be $0$ still requires the techniques from \cite{MonEqui, MonBSL}.

A third example is the relation of bi-embeddability on the class of countable $p$-groups.
The ranking function here is the Ulm rank, and the fact that there are only countably many equivalence classes of each Ulm rank follows from the work on Barwise and Eklof \cite{BE70}.
If a $p$-group $\A$ has a sub-group isomorphic to $\ZZ(p^\infty)^\om$, then it is bi-embeddable with it.
Otherwise, it is proved in \cite[Section 7]{GMRanked} that the Ulm rank of $\A$ is less than $\om_1^\A$.
It follows from the theorem above that for every oracle on a cone, every hyperarithmetic $p$-group is bi-embeddable with a computable one.
Greenberg and Montalb\'an \cite[Theorem 1.11]{GMRanked} had shown that this holds for every oracle.
Again, the results of  Greenberg and Montalb\'an on  $p$-groups are still required to get the base of the cone to be $0$.

%
%

\subsection{Martin's Theorems} \label{se: martins}

Before proving Theorem \ref{thm: easy version}, we review the key lemmas of Martin that we will use.
We start by stating Turing determinacy.
A {\em pointed} tree is a perfect sub-tree $P$ of $2^{<\om}$, all whose paths compute $P$.
Given $X\in 2^\om$, we let $P(X)$ be the path of $P$ obtained by following $X$ at every split of $P$.
(That is, if we think of $P$ as the downward closure of the image of a function $p\colon 2^{<\om}\to 2^{<\om}$ which preserves the lexicographic ordering and inclusion, then $P(X)=\bigcup_np(X\upto n)$.)
Notice that when $P$ is pointed we have that for every $X\in 2^\om$, $P(X)\teq X\oplus P$.
A {\em cone} is a class of the form $\{X\in 2^\om: X\geqt Y\}$ for some $Y$.
If $P$ is pointed, then the closure of $[P]$ under Turing equivalence is exactly the cone above $P$.

\begin{lemma}[{Martin, see \cite{SS88}}] (ZFC+PD) \label{lemma: Martin pointed}
Let $\varphi\subseteq 2^\om$ be a projective class of reals cofinal in the Turing degrees (i.e.,  $\forall X\in 2^\om\ \exists Y\in 2^\om\ (\varphi(Y) \wedge Y\geqt X)$).
Then, there exists a pointed tree $P\subseteq 2^\om$ all whose paths satisfy $\varphi$.
\end{lemma}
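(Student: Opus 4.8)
The plan is to run an auxiliary game, designed so that Player~I has full control of the real that is tested against $\varphi$, and then to read a pointed tree off a winning strategy for Player~I. The point of the design is that cofinality only lets us control the \emph{degree} of a witness, whereas the conclusion demands a tree whose paths lie \emph{literally} in $\varphi$; so I cannot use the naive game in which the two players jointly build a single real. Instead, consider the game $\G$ in which the players alternate bits, Player~I's moves assembling a real $Z\in 2^\om$ and Player~II's moves a real $W\in 2^\om$, where Player~I wins the run iff $Z\in\varphi$ and $W\leqt Z$. Since $\varphi$ is projective and the relation $W\leqt Z$ is arithmetic, the payoff set is projective, so by PD the game $\G$ is determined.

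First I would rule out Player~II. Suppose $\tau$ were a winning strategy for II. Applying cofinality to $X=\tau$, fix $Y\in\varphi$ with $Y\geqt\tau$, and let Player~I simply play the bits of $Y$, ignoring II. Then in the resulting run $Z=Y\in\varphi$, while II's moves $W$, being produced by $\tau$ from a history computable from $Y$, satisfy $W\leqt Y\oplus\tau\teq Y=Z$ (using $Y\geqt\tau$). Hence Player~I wins this run, contradicting that $\tau$ is winning for~II. By determinacy, Player~I therefore has a winning strategy $\sigma$.

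Finally I would extract a pointed tree from $\sigma$. For each $h\in 2^\om$, let $W_h=\sigma\oplus h$ be a II-play and let $Z_h$ be the real produced by I following $\sigma$ against $W_h$. Since $\sigma$ wins, $Z_h\in\varphi$ and $W_h\leqt Z_h$, so $Z_h\geqt\sigma$ and $Z_h\geqt h$ for every $h$; the coding of $\sigma$ into II's moves is exactly what forces each outcome to compute $\sigma$. The map $h\mapsto Z_h$ is continuous and $\sigma$-computable, and because $Z_h\geqt h$ it is constant on no basic open set (only countably many $h$ can map to a fixed real). A routine $\sigma$-effective splitting construction then produces a perfect $Q\leqt\sigma$ on which $h\mapsto Z_h$ is injective, yielding a $\sigma$-computable perfect tree $P\subseteq 2^{<\om}$ with $[P]=\{Z_h:h\in Q\}\subseteq\varphi$. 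Every path satisfies $Z_h\geqt\sigma\geqt P$, so $P$ is pointed, as required.

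I expect the genuinely delicate step to be this last extraction: ensuring the paths land in $\varphi$ itself rather than merely in its closure under $\teq$ (which is what a bare cone argument would give), while simultaneously keeping the tree pointed. Both requirements are met only because the asymmetric payoff hands Player~I control of the tested real and because forcing II to reveal $\sigma$ inside $W_h$ makes each outcome recover $\sigma$, hence the tree; the perfect-set and injectivity bookkeeping is standard once these two features are in place.
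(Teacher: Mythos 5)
The paper does not prove this lemma; it is quoted from Martin as written up in \cite{SS88}. Your argument is correct and is essentially that standard proof: the asymmetric game in which one player alone builds the real tested against $\varphi$ and wins iff that real lies in $\varphi$ and computes the opponent's real, the cofinality hypothesis ruling out a winning strategy for the opponent, and the pointed perfect tree extracted from the winning strategy by feeding in $\sigma\oplus h$ and doing a $\sigma$-effective splitting construction (your version merely swaps which player is which relative to the usual write-up, which is immaterial).
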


\begin{corollary}[Turing-determinacy]  (ZFC+PD) \label{cor: TD}
Let $\varphi\subseteq 2^\om$ be a projective class of reals invariant under Turing equivalence.
Then either $\varphi$ contains a cone or $\varphi$ is disjoint from a cone.
\end{corollary}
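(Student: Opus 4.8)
The plan is to split into two cases according to whether $\varphi$ is cofinal in the Turing degrees, i.e.\ whether $\forall X\in 2^\om\ \exists Y\geqt X\ \varphi(Y)$ holds. This is a clean dichotomy, and each side will land us in exactly one of the two alternatives in the statement.

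First, suppose $\varphi$ is cofinal. Since $\varphi$ is projective, Lemma~\ref{lemma: Martin pointed} applies and yields a pointed tree $P$ all of whose paths satisfy $\varphi$; that is, $[P]\subseteq\varphi$. As recorded just before the lemma, because $P$ is pointed the closure of $[P]$ under Turing equivalence is exactly the cone $\{X\in 2^\om: X\geqt P\}$. Now I would invoke the hypothesis that $\varphi$ is invariant under Turing equivalence: since every path of $P$ lies in $\varphi$ and $\varphi$ is closed under $\teq$, the entire $\teq$-closure of $[P]$ lies in $\varphi$, and hence the cone above $P$ is contained in $\varphi$. Thus $\varphi$ contains a cone.

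Second, suppose $\varphi$ is not cofinal. Negating the cofinality statement directly produces an $X$ with $\forall Y\geqt X\ \neg\varphi(Y)$, i.e.\ the cone $\{Y\in 2^\om: Y\geqt X\}$ is disjoint from $\varphi$. Hence $\varphi$ is disjoint from a cone, and the two cases together establish the claimed dichotomy.

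There is no genuine obstacle here, since Martin's Lemma~\ref{lemma: Martin pointed} carries all the real weight (and is where PD enters), while the deduction itself is purely combinatorial. The only points requiring care are: (i) projectivity of $\varphi$ is used only in the cofinal branch, to license the application of the lemma, whereas the non-cofinal branch needs no regularity of $\varphi$ at all; and (ii) the passage from $[P]\subseteq\varphi$ to ``the cone above $P$ is contained in $\varphi$'', which is precisely where Turing-invariance of $\varphi$ is needed, combined with the recorded fact that the $\teq$-closure of the paths of a pointed tree is exactly the cone above it.
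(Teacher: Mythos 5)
Your proof is correct and is exactly the intended derivation: the paper states this as an immediate corollary of Lemma~\ref{lemma: Martin pointed} without writing out the argument, and your two-case split (cofinal versus non-cofinal), together with the observation that the $\teq$-closure of $[P]$ is the cone above $P$ and that Turing-invariance then pulls the whole cone into $\varphi$, is precisely what is being left to the reader.
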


\begin{lemma}[Martin](ZFC+PD) \label{lemma: Martin ordinals}
Let $f\colon 2^\om \to \om_1$ be a function invariant under Turing equivalence which has a projective presentation, i.e., there is a projective $g\colon 2^\om\to 2^\om$ such that $g(X)$ is a well-ordering isomorphic to $f(X)$.
If for every $X$, $f(X)< \om_1^X$, then $f$ is constant on a cone.
\end{lemma}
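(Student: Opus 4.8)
The plan is to analyze $f$ through the \emph{cone measure} supplied by Turing determinacy. Let $\mu$ be the filter on the Turing degrees whose measure-one sets are the projective Turing-invariant classes that contain a cone. By Corollary~\ref{cor: TD}, $\mu$ is an ultrafilter on the projective Turing-invariant classes, and since a countable intersection of cones contains a cone (the cone above the join of the bases), $\mu$ is countably complete; hence the ultrapower embedding $j\colon V\to\Ult(V,\mu)$ is well-founded and has critical point $\geq\om_1$, so $j\restriction\om_1$ is the identity. Because $f$ is Turing-invariant with a projective presentation, it is $\mu$-measurable and $[f]_\mu$ is a genuine ordinal; moreover, for any $\b<\om_1$ we have $[f]_\mu=\b$ if and only if $f\equiv\b$ on a cone (using $j(\b)=\b$ and the fundamental theorem of ultrapowers). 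Thus the whole statement reduces to showing $[f]_\mu<\om_1$.

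Since $f(X)<\om_1^X$ for every $X$, the fundamental theorem gives $[f]_\mu<[\,X\mapsto\om_1^X\,]_\mu$, so it suffices to prove the key identity $[\,X\mapsto\om_1^X\,]_\mu=\om_1$. The plan is to show that $X\mapsto\om_1^X$ pushes $\mu$ forward to the closed unbounded filter on $\om_1$. The central step, and the main obstacle, is the claim that for every $b$ the set $E_b=\{\om_1^X: X\geqt b\}$ is a club in $\om_1$: unboundedness is immediate since the $b$-admissible ordinals are cofinal in $\om_1$, while closure requires the relativized form of Sacks's theorem (every $b$-admissible ordinal above $\om_1^b$ is $\om_1^Y$ for some $Y\geqt b$), applied to a strictly increasing sequence $\om_1^{X_n}$ whose supremum is a $b$-admissible ordinal above $\om_1^b$ and so is realized as $\om_1^Y$ for some $Y\geqt b$. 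Granting this, Turing determinacy applied to $\{X:\om_1^X\in S\}$ shows that for every projective $S\subseteq\om_1$ either $S$ or its complement contains some $E_b$, hence a club; so the club filter is a normal ultrafilter (normality being the usual ZFC fact), and since the pushforward $(\om_1^{(\cdot)})_*\mu$ is contained in this ultrafilter it must equal it. By the fundamental theorem $[\,X\mapsto\om_1^X\,]_\mu=[\mathrm{id}]_{\mathrm{club}}$, and for a normal measure on $\om_1$ the identity represents the critical point $\om_1$, which yields the identity.

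Combining, $[f]_\mu<[\,X\mapsto\om_1^X\,]_\mu=\om_1$, so $[f]_\mu=\b$ for some $\b<\om_1$, and therefore $f\equiv\b$ on a cone, as required. I expect the delicate point to be the closure of $E_b$, equivalently that the supremum of an increasing $\om$-sequence of $b$-admissibles is again of the form $\om_1^Y$ with $Y\geqt b$. It is worth noting why the argument must be run globally through the measure rather than by producing a single witness: a naive attempt to build $X$ with $f(X)\geq\om_1^X$ by joining fails precisely because $\bigoplus_n X_n$ typically computes a copy of $\sup_n\om_1^{X_n}$ and so pushes $\om_1^X$ strictly above that supremum. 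This is also where the hypothesis $f(X)<\om_1^X$ is essential, with $X\mapsto\om_1^X$ itself sitting exactly at the boundary.
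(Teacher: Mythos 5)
Your route---pushing the Martin measure forward under $X\mapsto\om_1^X$ and identifying the result with the club filter---is genuinely different from the paper's and can be made to work, but the step you yourself flag as delicate is wrong as stated: $E_b=\{\om_1^X: X\geqt b\}$ is \emph{not} closed, hence not a club. By the relativized Sacks theorem $E_b$ is exactly the set of $b$-admissible ordinals $>\om$, and the admissibles are not closed under countable suprema: if $\a_n$ is the $n$-th $b$-admissible above $\om_1^b$ and $\a=\sup_n\a_n$, then $n\mapsto\a_n$ is $\Sigma_1$-definable over $L_\a[b]$ with domain $\om$ and cofinal range, so $\Sigma_0$-collection fails and $\a$ is not $b$-admissible (already $\sup_n\om_n^{CK}$ is a non-admissible limit of admissibles). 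So your proposed justification of closure---that the supremum of an increasing sequence $\om_1^{X_n}$ is again $b$-admissible and hence realized as some $\om_1^Y$---fails at the first conjunct. The repair is to weaken the claim: $E_b$ is not a club but it \emph{contains} one, namely $\{\a>\om_1^b: L_\a[b]\prec_{\Sigma_1}L_{\om_1}[b]\}$; every such $\a$ is $b$-admissible and hence lies in $E_b$ by relativized Sacks. (This is the same club the paper invokes in the proof of Theorem \ref{thm: minimal example}.) ``Contains a club'' is all the rest of your argument needs: a pushforward-measure-one set contains some $E_b$, hence a club, so the pushforward is contained in the club filter and your normality computation of $[\,X\mapsto\om_1^X\,]_\mu=\om_1$ goes through.

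Two further cautions, and a comparison. Under PD (as opposed to AD) the measure $\mu$ only decides projective Turing-invariant sets, so ``$\Ult(V,\mu)$'' and the fundamental theorem are not literally available; you must restrict throughout to projectively presented functions (which suffices here, since the only functions in play are $f$, the constants, and $X\mapsto\om_1^X$, and the preimages $f^{-1}(\gamma)$ are projective from $g$ and a code for $\gamma$), and ``the club filter is a normal ultrafilter'' must likewise mean Fodor's lemma together with the ultrafilter property on projectively coded subsets of $\om_1$. The paper's proof is considerably more elementary: it fixes, by a pigeonhole over indices, a single $e$ with $\{e\}^X\isom f(X)$ on a cofinal set, passes to a pointed tree by Lemma \ref{lemma: Martin pointed}, bounds the resulting total continuous map into the well-orderings by $\Si^1_1$-boundedness, and then applies Turing determinacy once more to a single level set; no ultrapowers, clubs, or Sacks's theorem appear. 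Your argument, once repaired, proves somewhat more---it identifies the pushforward of the Martin measure as the club filter on projective sets---at the cost of that extra machinery.
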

\begin{proof}
For each $X$, since $f(X)< \om_1^X$, there exists an $e\in\om$ such that $\{e\}^X$ is a well-ordering isomorphic to $f(X)$.
Therefore, there exists some $e$ such that on a cofinal set of $X$, we have $\{e\}^X \isom f(X)$.
Then, by Martin's lemma, there exists  a pointed tree $P$, such that for every $X\in[P]$, $g(X)\isom \{e\}^X$.
So, the map $Y\mapsto \{e\}^{P(Y)}$ is a continuous map from $2^\om$ to the set of well-orderings, which coincides with $f$ on the cone above $P$.
By $\Si^1_1$-boundedness, any such continuous map has to be bounded below $\om_1$, and hence $f$ is bounded in the cone above $P$.
Therefore, there has to be some ordinal $\a$ such that for an cofinal set of reals $X$, $f(X)=\a$.
By Turing determinacy again, $f(X)=\a$ on a cone.
\end{proof}

We are now ready to prove Theorem \ref{thm: easy version}.
Fix a scattered projective ranked equivalence relation $(\Kfr,{\equiv}, r)$.
If there are only countably many $\equiv$-equivalence relations, then $(\Kfr,\equiv)$ trivially satisfies hyperarithmetic-is-recursive on a cone  (just take an oracle that computes a member of each equivalence class).
Thus, let us assume that there are uncountably many equivalence classes, and hence that the image of $r$ is unbounded in $\om_1$.

\begin{proof}[Proof of Theorem \ref{thm: easy version}]
Suppose, toward a contradiction,  that on every cone there is a $Z$ for which there exists an $[X]\in \Kfr/\equiv$ with $r(X)<\om_1^Z$ but with no $Z$-computable members.
Then, by Turing determinacy (Corollary \ref{cor: TD}), there is a cone of such $Z$.
For each $Z$ in this cone, let $\a_Z$ be the least $\a< \om_1^Z$ for which there is an $X\in \Kfr$ such that $r(X)=\a$ and $Z$ does not compute any member of $[X]$.
By Lemma \ref{lemma: Martin ordinals}, the function $Z\mapsto \a_Z$ is constant on a cone.
This is not possible, because there are only countably many equivalence classes with rank less than a fixed $\alpha$, and any $Z$, sufficiently high up in the Turing degrees,  can compute members of all of them.
\end{proof}

\section{Minimal counterexamples} \label{se: minimal}

In this section we show that if $T$ is a minimal counterexample to Vaught's conjecture (see definition below), $T$ satisfies that hyperarithmetic-is-recursive on a cone.

\begin{definition}
We say that an $\L_{\om_1,\om}$ sentence $T$ is a {\em minimal counterexample to Vaught's conjecture} if it has $\aleph_1$ many models, it is scattered, and for any $\L_{\om_1,\om}$ sentence $\varphi$, either $T\wedge \varphi$ or $T\wedge\neg\varphi$ has countably many models.
\end{definition}

It is known that if there is a counterexample to Vaught's conjecture, then there is a minimal one (see  \cite[Theorem 1.5.11]{Ste78}).

\begin{theorem} \label{thm: minimal example}
Let $T$ be a minimal counterexample to Vaught's conjecture.
Then, there is an oracle $Z_0$ such that, for every $Z\geqt Z_0$, and every model $\A\models T$,  
\[
\om_1^{\A,Z_0}\leq \om_1^Z \ \ \iff \ \ Z\in\Sp(\A).
\] 
\end{theorem}

Observe that this says that, relative to $Z_0$, $\Sp(\A)=\{Z\in 2^\om:\om_1^Z\geq\om_1^\A\}$.
Note that this implies that $T$ satisfies hyperarithmetic-is-recursive on the cone above $Z_0$.
We also remark that the right-to-left direction is immediate.
The rest of this section is dedicated to prove the left-to-right direction.

\begin{lemma}\label{le: club}
Let $T$ be a minimal counterexample to Vaught's conjecture.
Then, there is a club of ordinals $C\subseteq \om_1$ such that for every $\a\in C$, any two models of $T$ of Scott rank at least $\a$ are $\Sii_{<\a}$-elementary equivalent.
\end{lemma}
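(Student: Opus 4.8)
The plan is to use minimality to isolate, at each countable level $\b$, a single ``large'' $\equiv_\b$-class, and then to run a closure argument showing that a model of large Scott rank cannot have left this large class too early.

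\textbf{Step 1 (a unique large class at each level).} Fix $\b<\om_1$. Since $\equiv_\b$ is a Borel equivalence relation whose classes are closed under isomorphism, each $\equiv_\b$-class is definable by an $\L_{\om_1,\om}$ sentence (this is where the Borel-ness of the back-and-forth relations from Section \ref{se: bf} and the Lopez--Escobar theorem enter). Because $T$ is scattered there are only countably many such classes, and since $T$ has uncountably many models, at least one of them meets $T$ in uncountably many isomorphism types. Minimality forbids two: if $K,K'$ were distinct $\equiv_\b$-classes each containing uncountably many models of $T$, then for the $\L_{\om_1,\om}$ sentence $\varphi_K$ defining $K$, both $T\wedge\varphi_K$ (containing $K$) and $T\wedge\neg\varphi_K$ (containing $K'$) would have uncountably many models, contradicting minimality. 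Hence there is a unique $\equiv_\b$-class $M_\b$ with uncountably many models of $T$; and for any other $\equiv_\b$-class $K$, since $T\wedge\neg\varphi_K\supseteq M_\b$ is uncountable, minimality forces $T\wedge\varphi_K$, i.e.\ the models of $T$ in $K$, to be countable.

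\textbf{Step 2 (nesting and the early defectors).} For $\b<\b'$ the relation $\equiv_{\b'}$ refines $\equiv_\b$ (every $\Sii_\b$ sentence is equivalent to a $\Sii_{\b'}$ one), so the uncountable class $M_{\b'}$ is contained in the unique uncountable $\equiv_\b$-class $M_\b$; thus the $M_\b$ are decreasing. For $\a<\om_1$ let $S_\a$ be the set of isomorphism types of models $\A\models T$ with $\A\notin M_\b$ for some $\b<\a$. By Step 1 this is a union, over the countably many $\b<\a$ and the countably many non-large $\equiv_\b$-classes, of countable sets, hence $S_\a$ is countable. Put $g(\a)=\sup\{SR(\A)+1:[\A]\in S_\a\}<\om_1$; as $S_\a\subseteq S_{\a'}$ for $\a\le\a'$, the function $g$ is nondecreasing, and $S_\a=\bigcup_{\g<\a}S_\g$ at limit $\a$.

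\textbf{Step 3 (the club).} Let $C=\{\a<\om_1:g(\a)\le\a\}$. This is closed, since at a limit $\a$ closed under $g$ we have $g(\a)=\sup_{\g<\a}g(\g)\le\a$, and unbounded, by the usual iteration $\g_{n+1}=\max(\g_n+1,g(\g_n))$. Now fix $\a\in C$ and models $\A,\B\models T$ with $SR(\A),SR(\B)\ge\a$, and suppose toward a contradiction that $\A\not\equiv_\b\B$ for some $\b<\a$. Then $\A$ and $\B$ lie in distinct $\equiv_\b$-classes, so at least one of them, say $\A$, is not in $M_\b$; hence $[\A]\in S_\a$ and $SR(\A)+1\le g(\a)\le\a$, contradicting $SR(\A)\ge\a$. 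Therefore $\A\equiv_\b\B$ for every $\b<\a$, i.e.\ $\A$ and $\B$ are $\Sii_{<\a}$-elementarily equivalent, as required.

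The main obstacle is Step 1: converting the single combinatorial hypothesis of minimality into the statement that exactly one $\equiv_\b$-class is large. This hinges entirely on the $\L_{\om_1,\om}$-definability of the individual $\equiv_\b$-classes; once that is secured, Steps 2 and 3 are a routine pressing-down argument carried out in ZFC, with no appeal to determinacy.
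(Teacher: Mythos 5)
Your proof is correct and follows essentially the same route as the paper's: minimality plus scatteredness yield a unique $\equiv_\b$-class containing uncountably many models of $T$ at each level $\b$, the remaining classes contain only countably many models and hence have countably bounded Scott rank, and a standard closure argument produces the club. The only cosmetic differences are that you invoke Lopez--Escobar for definability of the $\equiv_\b$-classes where the paper uses its $\Pii_{\b+1}$ formulas $\bfpsi_\si$ from Section \ref{ss: extended language}, and that you package the closure argument via the function $g$ and its closure points rather than an explicit $\om$-step iteration from an arbitrary starting ordinal.
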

This lemma is probably known.
We include a proof using some of the techniques from Section \ref{se: bf} for completeness. 
\begin{proof}
Let $C$ be the set of all $\a\in\om_1$ such that any two models of $T$ of Scott rank at least $\a$ are $\Sii_{<\a}$-elementary equivalent.

We show that $C$ is unbounded.
Let $\a_0< \om_1$ be any ordinal.
We will define an increasing sequence $\{\a_i:i\in\om\}$ whose limit will satisfy the desired property.
Suppose we have already defined $\a_i$.
For each $\equiv_{\a_i}$-equivalence class $\si$, there is a $\Pii_{\a_i+1}$ sentence $\bfpsi_\si$ defining the models in that equivalence class (see Section \ref{ss: extended language}).
Thus, since  $T$ is minimal, for each such equivalence class, either there are countably many models in that class, or countably many models outside the class. 
Therefore, all $\equiv_{\a_i}$-equivalence classes $\si$, except for one, have countably many models.
So, for those $\si$, there is a $\gamma_\si<\om_1$ such that every model of $T$ in the $\equiv_{\a_i}$-equivalence class $\si$ has Scott rank less than $\gamma_\si$.
Since $T$ is scattered, there are countably many such $\si$, and we can define $\a_{i+1}$ to be the least  ordinal that bounds all such $\gamma_\si$.
Let $\a=\sup_{i\in\om} \a_i$.

We claim that any two models of $T$ of Scott rank at least $\a$ are $\Sii_{<\a}$-elementary equivalent:
If $\b<\a$, then $\b<\a_i$ for some $i$, and hence any two models of $T$ of Scott rank at least $\a$ belong to the same $\equiv_{\a_i}$-equivalence class, and hence are $\Sii_\b$-elementary equivalent.

This last paragraph also serves to show that $C$ is closed under limits. 
\end{proof}

\begin{lemma}\label{lemma: Sacks 4.7}
Let $T$ be an  $\L^c_{\om_1,\om}$ counterexample to Vaught's conjecture.
Then, for every admissible $\a$, there is a model $\A\models T$ with $SR(\A)\geq \om_1^\A=\a$.
\end{lemma}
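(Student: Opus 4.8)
The plan is to fix an admissible $\a$, write $\a=\om_1^X$ for some real $X$ (Sacks' characterization of admissibles), and reduce to the lightface case $\a=\wck$ by relativizing everything below $\om_1$ to $X$. Scatteredness and the number of models of $T$ are unaffected by the oracle, and the back-and-forth relations $\equiv_\b$ do not depend on $X$ at all, so $T$ remains a counterexample relative to $X$; moreover, by the fact recorded at the end of Section \ref{ss: background}, once a model has high Scott rank relative to $X$ it satisfies $\om_1^{\A,X}=\om_1^\A$, so producing the relativized version with $\om_1^{\A,X}=\om_1^X$ will yield $\om_1^\A=\a$ outright. Under this reduction the goal becomes: build a model $\A\models T$ with $SR(\A)=\wck+1$ whose isomorphism type has a copy computable from some $Y$ with $\om_1^Y=\wck$. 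This suffices because $\om_1^\A\geq\wck$ for every structure, while a copy computable from such $Y$ forces $\om_1^\A\leq\wck$, so that $\wck+1=SR(\A)\leq\om_1^\A+1\leq\wck+1$ collapses to $\om_1^\A=\wck$ and hence high Scott rank. In short, all the content is that a lightface $\L^c_{\om_1,\om}$ counterexample has a model of the largest possible Scott rank $\wck+1$ that is not too complicated, namely with $\om_1^\A=\wck$.

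First I would record that $T$ has models of unboundedly high Scott rank. This is exactly where the counterexample hypothesis is used: since $T$ is scattered, the argument in Section \ref{se: morely} shows that for each $\b$ there are only countably many models of Scott rank $\b$, so if the Scott ranks of models of $T$ were bounded in $\om_1$ then $T$ would have only countably many models. Hence these Scott ranks are cofinal in $\om_1$. (Alternatively one can feed this through the relativized form of the result of Sacks \cite{Sac07} quoted above: relative to every oracle $W$ there is $\A_W\models T$ with $SR(\A_W)=\om_1^{\A_W,W}+1$, which is high rank relative to $W$ and hence has $\om_1^{\A_W}=\om_1^{\A_W,W}\geq\om_1^W$.) In particular, for every $\b<\wck$ there is a model $\B\models T$ carrying a pair of tuples $\abar,\bbar$ with $\abar\equiv_\b\bbar$ but $\abar\not\equiv_\infty\bbar$, i.e. $\abar$ and $\bbar$ share the same $\Sii_\b$-type while lying in different automorphism orbits.

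The construction itself I would carry out by Barwise compactness over the least admissible set $L_{\wck}$ (see \cite{Bar75}), in the style of the Harrison linear order. Working in the $L_{\wck}$-recursive fragment of $\L^c_{\om_1,\om}$, I would write down an $L_{\wck}$-recursively-enumerable theory $S\supseteq T$ in a language expanded by a distinguished pair $\cbar,\dbar$ together with a coded linear order $R$ of ``back-and-forth ranks'': $S$ forces an embedded copy of each $\b<\wck$ into $R$, attaches to each level of $R$ a witness that the $\Sii$-type back-and-forth relation genuinely refines there, and forces $\cbar\equiv_\b\dbar$ for every $\b<\wck$. The models of the previous paragraph show that every $L_{\wck}$-finite subtheory of $S$ is satisfiable, so Barwise compactness yields $\A\models S$ whose standard ordinal part coincides with $\mathrm{Ord}(L_{\wck})=\wck$; such a model has $\om_1^\A=\wck$. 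Because $R$ must contain a copy of every $\b<\wck$ yet cannot be well-ordered by anything $L_{\wck}$-recursive, it comes out ill-founded (just as the Harrison order has type $\wck\cdot(1+\QQ)$), and this ill-foundedness transfers through the coding to make the distinguished pair $\cbar,\dbar$ separated only at a non-standard rank, hence not automorphic. Such a pair has back-and-forth ordinal $\geq\wck$, so $SR(\A)\geq\wck$, and with the Nadel bound $SR(\A)\leq\om_1^\A+1=\wck+1$ we get high Scott rank with $\om_1^\A=\wck$. Undoing the relativization to $X$ produces a model of $T$ with $SR(\A)\geq\om_1^\A=\a$, as required.

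The main obstacle, as always in this kind of construction, is the last step: arranging that Barwise compactness delivers a model that genuinely has high Scott rank, rather than one in which the forced pair $\cbar,\dbar$ collapses into a single orbit (which would satisfy all the type-identifying axioms trivially and give only a low-rank model). This is precisely the phenomenon that makes the Harrison order ill-founded, and the delicate design point is to set up the coded order $R$ so that ill-foundedness of $R$ is \emph{equivalent} to non-automorphy of $\cbar,\dbar$; once that equivalence is in place, the standard-part computation for Barwise models does the rest. A secondary point I would verify carefully is that the model returned by the compactness theorem has $\om_1^\A$ equal to $\wck$ and no larger, which again follows from the fact that its well-founded ordinal part is exactly $\mathrm{Ord}(L_{\wck})$.
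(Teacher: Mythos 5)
Your reduction to the lightface case, and your derivation from scatteredness that $T$ has models of unboundedly high Scott rank, are both correct and are exactly the facts the paper uses. The divergence, and the genuine gap, is in the construction step. The paper does not use Barwise compactness: it forms $\bar{T}=T\cup\{S_\b:\b<\a\}$, where $S_\b$ is an $\L^{c,X}_{\om_1,\om}$ sentence asserting (via the back-and-forth relations) that $SR(\A)\geq\b$, notes that the class of models of this $\Pi^1_1(X)$ theory is a nonempty $\Si^1_1(X)$ class, and applies \emph{Gandy's basis theorem} to extract a model with $\om_1^\A\leq\om_1^X=\a$. Your proof never invokes a basis theorem, and that is where it breaks. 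Barwise compactness over $L_{\wck}$ produces \emph{some} model of your theory $S$, but the model delivered by the Henkin/consistency-property argument is in general only computable from the complete $\Sigma_1(L_{\wck})$ set, i.e.\ from Kleene's $\O$, and such a model can perfectly well have $\om_1^\A>\wck$. Your claim that ``Barwise compactness yields $\A$ whose standard ordinal part is $\mathrm{Ord}(L_{\wck})$, and such a model has $\om_1^\A=\wck$'' conflates two different things: even if the coded order $R$ inside $\A$ had well-founded part exactly $\wck$, that bounds neither $SR(\A)$ from below nor $\om_1^\A$ from above, since $\om_1^\A$ is determined by which oracles compute a \emph{copy} of $\A$, not by which ordinals $\A$ codes. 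Arranging $\om_1^\A=\a$ is the whole content of the lemma, and it requires Gandy's theorem (or an equivalent basis theorem) applied to the $\Si^1_1$ class of models; the packaged ``Barwise model with standard part $\wck$'' results you are implicitly leaning on are themselves proved by exactly that combination.

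The second gap is the one you flag yourself: your theory forces only $\cbar\equiv_\b\dbar$ for standard $\b$, and nothing in the fragment can force $\cbar$ and $\dbar$ to be non-automorphic, so the compactness argument may return a model in which they lie in one orbit and the Scott rank is low. You propose to repair this by making ill-foundedness of $R$ ``equivalent'' to non-automorphy, but you do not say how, and it is unclear that this can be axiomatized. The clean repair is to drop the named pair and instead include, for each $\b<\wck$, a sentence asserting that \emph{there exist} tuples $\xbar,\ybar$ with $\xbar\equiv_\b\ybar$ but $\xbar\not\equiv_{\b+1}\ybar$; any model of all of these automatically has $SR\geq\wck$, with no orbit-collapse to worry about. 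With that change, and with Gandy's basis theorem in place of bare Barwise compactness, your argument becomes the paper's.
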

This is a well-known application  of  Gandy's basis theorem.
We include it for completeness.
\begin{proof}[Sketch of the proof]
Let $X$ be such that $\om_1^X=\a$.
For each $\b<\a$, there is an $\L^{c,X}_{\om_1,\om}$ sentence $S_\beta$ such that $\A\models S_\b$ if and only if $SR(\A)\geq \b$ (this sentence can be built using the back-and-forth relations from Section \ref{se: bf}).
Now, $\bar{T}=T\cup \{S_\b:\b<\om_1^X\}$ is a $\Pi^1_1(X)$ theory of $\L^{c,X}_{\om_1,\om}$, and has a model $\A$ since $T$ has models of arbitrary high Scott rank.
Being a model of a $\Pi^1_1(X)$ theory is a $\Si^1_1(X)$ property, and hence by Gandy's basis theorem (see \cite[III.1.5]{Sac90}), there is  a model $\A\models \bar{T}$ with $\om_1^\A\leq\om_1^X=\a$.
All models of $\bar{T}$ satisfy that $\a \leq SR(\A)$ and all  structures satisfy $SR(\A)\leq \om_1^\A+1$.
Since $\a$ and $\om_1^\A$ are limit ordinals, it follows that $\om_1^\A=\a$ and that $SR(\A)$ is either $\a$ or $\a+1$.
\end{proof}

\begin{corollary}\label{cor: Sacks 4.7}
Let $T$ be an  $\L^c_{\om_1,\om}$  counterexample to Vaught's conjecture.
Then, for every $Z$, there is a model $\A\models T$ with $SR(\A)\geq \om_1^{\A,Z}=\om_1^Z$.
\end{corollary}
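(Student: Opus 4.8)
The plan is to recognize Corollary \ref{cor: Sacks 4.7} as nothing more than the relativization of Lemma \ref{lemma: Sacks 4.7} to the oracle $Z$, applied at the specific admissible ordinal $\a=\om_1^Z$. The ordinal $\om_1^Z$ is admissible—by Sacks' characterization it is of the form $\om_1^X$ with the choice $X=Z$—so the entire argument of Lemma \ref{lemma: Sacks 4.7} runs verbatim with $Z$ in the role of the real $X$ there, giving exactly the conclusion $SR(\A)\geq\om_1^{\A,Z}=\om_1^Z$.

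Concretely, I would first note that since $T$ is an $\L^c_{\om_1,\om}$ sentence it is a fortiori an $\L^{c,Z}_{\om_1,\om}$ sentence, and it is still a counterexample to Vaught's conjecture (scatteredness and the number of countable models are absolute, not oracle-dependent). For each $\b<\om_1^Z$ the back-and-forth relations from Section \ref{se: bf} are $Z$-computable, so there is an $\L^{c,Z}_{\om_1,\om}$ sentence $S_\b$ with $\A\models S_\b$ if and only if $SR(\A)\geq\b$. Setting $\bar T=T\cup\{S_\b:\b<\om_1^Z\}$, this is a $\Pi^1_1(Z)$ theory of $\L^{c,Z}_{\om_1,\om}$, and it has models because $T$ has models of arbitrarily high Scott rank. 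The class of models of $\bar T$ is $\Si^1_1(Z)$, so by Gandy's basis theorem relative to $Z$ (cf. \cite[III.1.5]{Sac90}) I obtain a model $\A\models\bar T$ with $\om_1^{\A\oplus Z}=\om_1^Z$; since $\A\oplus Z$ computes $\A$ and $\A\oplus Z\geqt Z$, this gives $\om_1^{\A,Z}\leq\om_1^Z$.

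It then remains to collapse the inequalities. Every model of $\bar T$ satisfies $S_\b$ for all $\b<\om_1^Z$, hence $SR(\A)\geq\om_1^Z$; and every structure satisfies $SR(\A)\leq\om_1^\A+1\leq\om_1^{\A,Z}+1\leq\om_1^Z+1$, using $\om_1^\A\leq\om_1^{\A,Z}$. Because $\om_1^Z$ is a limit ordinal, the chain $\om_1^Z\leq SR(\A)\leq\om_1^{\A,Z}+1\leq\om_1^Z+1$ forces $\om_1^{\A,Z}=\om_1^Z$ (a strict inequality $\om_1^{\A,Z}<\om_1^Z$ would, at a limit, give $\om_1^{\A,Z}+1<\om_1^Z$, a contradiction). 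Combining, $SR(\A)\geq\om_1^Z=\om_1^{\A,Z}$, which is the claim.

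The only points needing care—and the closest thing to an obstacle—are checking that the relativized Gandy basis theorem bounds $\om_1^{\A,Z}$ rather than merely $\om_1^\A$, and that the final squeeze truly yields equality; both are immediate once one uses that $\om_1^Z$ is a limit. Everything else is a line-by-line relativization of the unrelativized lemma, so no genuinely new idea is required.
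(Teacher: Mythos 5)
Your proposal is correct and is exactly the paper's proof: the paper disposes of this corollary in one line ("relativize the proof above to $Z$ and let $\a=\om_1^Z$"), and your write-up is simply that relativization carried out in full, including the same use of the relativized Gandy basis theorem and the same squeeze $\om_1^Z\leq SR(\A)\leq\om_1^{\A,Z}+1\leq\om_1^Z+1$ at a limit ordinal. The details you flag as needing care are handled correctly.
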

\begin{proof}
Relativize the proof above to $Z$ and let $\a=\om_1^Z$.
\end{proof}

Recall from Section \ref{se: omega 1 CK} that if $SR(\A)\geq \om_1^{\A,Z}$, then $\om_1^{\A,Z}=\om_1^\A$.

\begin{lemma}\label{le: intermediate}
Let $Y,Z\in 2^\om$ be such that $\om_1^Y=\om_1^Z$.
There exists a $G\in 2^\om$ such that
\[
 \omega_1^Z = \omega_1^{Z\oplus G} = \omega_1^{G} = \omega_1^{G\oplus Y} = \omega_1^Y.
\]
\end{lemma}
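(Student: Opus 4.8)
The plan is to set $\alpha=\omega_1^Y=\omega_1^Z$ and to build the single real $G$ by a coding forcing. First I would reduce the five-fold equality to three conditions on $G$. Since $Y\leqt Y\oplus G$ and $Z\leqt Z\oplus G$ we get $\omega_1^{Y\oplus G}\geq\alpha$ and $\omega_1^{Z\oplus G}\geq\alpha$ for free, and since $G\leqt Y\oplus G$ we get $\omega_1^G\leq\omega_1^{Y\oplus G}$. Hence it suffices to produce one $G$ satisfying: (A) every $\beta<\alpha$ has a $G$-computable copy, so that $\omega_1^G\geq\alpha$; (B) $L_\alpha[Y\oplus G]$ is admissible, so that $\omega_1^{Y\oplus G}\leq\alpha$; and (C) $L_\alpha[Z\oplus G]$ is admissible, so that $\omega_1^{Z\oplus G}\leq\alpha$. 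Indeed, (B) gives $\omega_1^{Y\oplus G}=\alpha$ and, with $G\leqt Y\oplus G$, also $\omega_1^G\leq\alpha$, which together with (A) yields $\omega_1^G=\alpha$; and (C) gives $\omega_1^{Z\oplus G}=\alpha$. Note that $\alpha$ is countable and that both $L_\alpha[Y]$ and $L_\alpha[Z]$ are admissible of height $\alpha$, the ordinals below $\alpha$ being exactly the $Y$-computable and exactly the $Z$-computable ones.

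The difficulty is that (A) and (B)--(C) pull in opposite directions. A real that is merely Cohen generic over $L_\alpha[Y]$ and $L_\alpha[Z]$ would satisfy (B) and (C) but, being generic also over $L_{\omega_1^{CK}}$, would have $\omega_1^G=\omega_1^{CK}$ and so fail (A) whenever $\alpha>\omega_1^{CK}$; conversely, simply writing fixed copies of a sequence $\langle\beta_n:n\in\omega\rangle$ cofinal in $\alpha$ into disjoint columns of $G$ secures (A) but lets $Y\oplus G$ compute the cofinal map $n\mapsto\beta_n$, destroying $\Sigma_1$-replacement and hence admissibility in (B). The construction must therefore code cofinally many ordinals into $G$ while keeping the coding spread out enough that no $\Sigma_1$-definable cofinal map into $\alpha$ ever appears over $L_\alpha[Y\oplus G]$ or over $L_\alpha[Z\oplus G]$. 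I would do this with a coding forcing in the spirit of Steel's tagged trees \cite{Ste78}: conditions are finite approximations to $G$ carrying finitely many ordinal tags below $\alpha$, subject to the side condition that tags descend along the approximation except at designated coding nodes, where the construction of a fresh copy of an ordinal is begun. Since $\alpha$ is countable, the generic can be built in $V$ by an $\omega$-step construction meeting countably many dense sets: for each $\beta<\alpha$, a requirement forcing the coding nodes to complete a $G$-computable copy of $\beta$ (this delivers (A)); and, for each $\Sigma_1$ formula with parameters, a requirement deciding the corresponding instance of $\Sigma_1$-collection over $L_\alpha[Y]$, together with the analogous requirement over $L_\alpha[Z]$ (these deliver (B) and (C)).

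I expect the main obstacle to be the admissibility-preservation lemma for this forcing, namely the verification that meeting the $\Sigma_1$-collection requirements really keeps $L_\alpha[Y\oplus G]$ admissible even though (A) forces $G$ to compute cofinally many ordinals. The mechanism is the descending-tag side condition: if some $\Sigma_1$ formula defined, over $L_\alpha[Y\oplus G]$, a map from a bounded set cofinally into $\alpha$, then a dense set of conditions would force this, and reading the tags along the generic branch would yield an infinite descending sequence of ordinals below $\alpha$, a contradiction. The same argument over $L_\alpha[Z]$ gives (C). The only feature beyond the classical single-oracle setting is that the two admissibility requirements and the single coding requirement must share one run of the bookkeeping; this causes no real trouble, since the forcing and all three families of dense sets are meaningful over both $L_\alpha[Y]$ and $L_\alpha[Z]$ and are simply merged into the same countable list. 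Once this lemma is established, the reduction of the first paragraph completes the proof of Lemma \ref{le: intermediate}.
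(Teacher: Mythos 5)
Your reduction to (A) every $\beta<\alpha$ has a $G$-computable copy, (B) $L_\alpha[Y\oplus G]$ admissible, (C) $L_\alpha[Z\oplus G]$ admissible, is correct, and the tagged-tree coding forcing you outline is a viable way to realize it; but this is a genuinely different and much heavier route than the paper's. The paper takes $\mathcal{H}_Z$ and $\mathcal{H}_Y$ to be $Z$- and $Y$-computable copies of the Harrison ordering $\omega_1^Z\cdot(1+\mathbb{Q})$ with domain $\omega$, fixes an isomorphism $f\colon\mathcal{H}_Z\to\mathcal{H}_Y$ (possible since $\omega_1^Y=\omega_1^Z$), picks a permutation $g$ of $\omega$ hyperarithmetically generic relative to $Y$, $Z$ and $f$, and lets $G$ be the pull-back of $\mathcal{H}_Z$ under $g$. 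Then your (A) is free because $G$, being a copy of the Harrison ordering, has well-ordered initial segments of every type $\beta<\alpha$; and (B), (C) collapse to the single quotable fact that $\omega_1^{g\oplus Z}=\omega_1^Z$ for $g$ hyperarithmetically generic over $Z$ (applied once to $g$ over $Z$ and once to $f\circ g$ over $Y$, using $G\leq_T g\oplus Z$ and $G\leq_T (f\circ g)\oplus Y$). The Harrison ordering is exactly a pre-fabricated resolution of the tension you identify: it codes cofinally many ordinals below $\alpha$, yet no $\Sigma_1$ cofinal map is extractable because the well-founded initial segments cannot be uniformly recognized. Your construction rebuilds such an object by hand; it is more self-contained and would generalize, but the admissibility-preservation (retagging) lemma you defer is precisely where all the work lives, and your proposal only gestures at its proof, whereas the paper outsources that content entirely to Harrison's theorem and the standard genericity fact.
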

\begin{proof}
Let $\H_Z$ be a $Z$-computable copy of $\om_1^Z\cdot(1+\QQ)$, let  $\H_Y$ be a $Y$-computable copy of $\om_1^Y\cdot(1+\QQ)$, both with domain $\om$, and let $f\colon\H_Z\to\H_Y$ be an isomorphism.
(The existence of $\H_Z$ and $\H_Y$ is due to Harrison \cite{Har68}.)
Notice that $f$ is a permutation of $\om$.
Let $g$ be a permutation of $\om$ which is  hyperarithmetically generic relative to $Y$, $Z$ and $f$.
Since $g$ is generic relative to $Z$, we have that $\om_1^{g\oplus Z}=\om_1^Z$ (this follows for instance from \cite[IV.3.6]{Sac90}).
Also, since $g$ is generic relative to $Y$ and $f$, $f\circ g$ is generic relative to $Y$, and hence $\om_1^{f\circ g\oplus Y}=\om_1^Y$.

Let $G$ be the pull-back of $\H_Z$ through $g$.
That is, $G$ is a copy of $\H_Z$, isomorphic to it via $g$.
Since $G$ has initial segments isomorphic to all the ordinals below $\om_1^Z$, we have that $\om_1^Z\leq \om_1^G$. 
Since $G\leqt g\oplus Z$, we have that $\om_1^G\leq \om_1^{G\oplus Z}\leq \om_1^{g\oplus Z}=\om_1^Z$.
Closing the circle we get 
\[
\om_1^G = \om_1^{G\oplus Z} = \om_1^Z.
\]
Notice that $G$ is also the pull-back of $\H_Y$ through $f\circ g$, and hence by the same argument $\om_1^G = \om_1^{G\oplus Y} = \om_1^Y$.
\end{proof}

\begin{proof}[Proof of Theorem \ref{thm: minimal example}]
Let $\KK$ be the class of models of $T$.
Then $(\KK, \isom, SR)$ is a projective scattered ranked equivalence relation.
So, by Theorem \ref{thm: easy version}, on a cone $\Cfr_4\subseteq 2^\om$ we have that every $Z$ computes a copy of all structures $\A\in\KK$ with $SR(\A)<\om_1^Z$.
Relativize the rest of the argument to the base of this cone.
Relativize again, if necessary, to make sure $T$ is a computable infinitary formula.

Let $\Cfr_3$ be the class of sets $Z$ which compute at least one $\A\in\KK$ with $SR(\A)\geq \om^{\A,Z}_1=\om_1^Z$.
This class is clearly cofinal in the Turing degrees:
given $Y$, there is an $\A$ with  $SR(\A)\geq \om^{\A,Y}_1=\om_1^Y$, and hence there is a $Z\geqt Y$, with $\om_1^Z=\om_1^Y$, which computes $\A$.
Thus, $\Cfr_3$ contains a cone; let $Z_3$ be the base of this cone.

Now, consider the class $\Cfr_0$ of sets $Z$ which compute a copy of all structures  $\A\in\KK$ with $SR(\A)\geq \om^{\A,Z_3}_1=\om_1^Z$.
We claim that this class is also cofinal:
Take $Z_2\in 2^\om$.
We can assume $Z_2\geqt Z_3$.
Let $\a$ be a $Z_2$-admissible ordinal that belongs to the club of  Lemma \ref{le: club}.
(Such an $\a$ exists because the class of $Z_2$-admissible ordinals contains a club, as for instance the set of all  $\a$ for which $L_\a[Z_2]$ is an elementary substructure of $L_{\om_1}[Z_2]$.)
Let $Z_1\geqt Z_2$ be such that 
\[
\om^{Z_1}_1=\a.
\]
We claim that $Z_1\in \Cfr_0$.
Let $\A\models T$ such that $SR(\A)\geq \om^{\A,Z_3}_1=\om_1^{Z_1}$ be given by Corollary \ref{cor: Sacks 4.7}.
Since $Z_1\geqt Z_3$, $Z_1$ computes at least one  structure $\C$ with $SR(\C)\geq \om_1^{\C,Z_1}= \om_1^{Z_1}$.
(Notice that $\om_1^{\C,Z_1}=\om_1^{\C,Z_3}=\om_1^\C$, because $SR(\C)\leq\om_1^\C+1$.)
We claim that $\C\isom \A$.
Let $Y\geqt Z_3$ be such that $\om_1^Y=\om_1^{Z_1}$ and $Y$ computes $\A$.
Let $G\geqt Z_3$ be such that 
\[ 
\a =\om_1^{Z_1}= \omega_1^{Z_1\oplus G} = \omega_1^{G} = \omega_1^{G\oplus Y} = \omega_1^Y.
\]
The existence of such $G$ is given by Lemma \ref{le: intermediate} relativized to $Z_3$.

Since $G\geqt Z_3$, $G$ computes at least one structure $\B\in \KK$ with $SR(\B)\geq \om^{\B}_1=\om_1^{G}$.

So, we have that $Y$ computes $\A$, $G$ computes $\B$ and $Z_1$ computes $\C$.
All these structures are models of $T$ and have Scott rank at least $\a$.
By the choice of $\a$, this implies that these three structures are $\Sii_{<\a}$-elementary equivalent.

Since $\om_1^{Z_1\oplus G}=\a$, $\L^{c,Z_1\oplus G}_{\om_1,\om}\subseteq  \Sii_{<\a}$.
Since ${Z_1\oplus G}$ computes both $\B$ and $\C$, it follows from Theorem \ref{thm: effective Scott}  that $\C$ and $\B$ are isomorphic.
Analogously, $\om_1^{G\oplus Y}=\a$ and $G\oplus Y$ computes both $\A$ and $\B$ so they are isomorphic.
Thus $\A\isom \C$.

This proves that $\Cfr_0$ is unbound in the Turing degrees, and hence it contains a cone with base $Z_0$.
Now, take $Z$ in this cone and $\A$ with $\om_1^{\A,Z_0}\leq \om_1^Z$.
If $SR(\A)<\om_1^Z$, then $Z$ computes a copy of $\A$ because $Z\in \Cfr_4$.
If $SR(\A)\geq \om_1^Z$, then 
\[
\om_1^Z+1 \geq \om_1^{\A,Z_0}+1 \geq \om_1^{\A,Z_3}+1\geq SR(\A) \geq \om_1^Z,
\]
and hence $SR(\A)\geq \om_1^{\A,Z_3}= \om_1^Z$.
It then follows from the definition of $\Cfr_0$ that $Z$ computes a copy of $\A$.
\end{proof}

Relativizing the the oracle $Z_0$ given by the previous proof, we remark that for every admissible ordinal $\a$, by Lemma \ref{lemma: Sacks 4.7}, there is a structure $\A\models T$ with $\om_1^\A=\a$, and hence with spectrum $\{Z\in 2^\om: \om_1^Z\geq \a\}$.
It follows that every minimal counterexample to Vaught's conjecture satisfies part (\ref{main: part 3}) in Theorem \ref{thm: main}.

\section{The fine back-and-forth structure} \label{se: bf}

In this section we describe the back-and-forth structure of a class of structures $\KK$.
We will show how, from this back-and-forth structure, we can computably build members of $\KK$ with prescribed back-and-froth types.
This will give us the key lemma used in the proof of Theorem \ref{thm: counter example}.

There are two non-equivalent definitions of the back-and-forth equivalence relations in the literature.
We use the finer version, which was introduced by Karp and is used in \cite[Chapter 15]{AK00}.
The notion in \cite{Bar73} is too coarse for our purposes.

\subsection{Back-and-forth relations}

The back-and-forth relations measure how hard it is to differentiate two structures or two tuples from the same structure.
The idea is that two tuples are $\a$-back-and-forth equivalent if we cannot differentiate them using only $\a$ Turing jumps.

Before giving the formal definition, we need a bit of notation.
If $\L$ is a language with infinitely many symbols, let $\L\upto k$ denote the first $k$ symbols in $\L$.
Recall that, without loss of generality, we assume $\L$ is a relational language.
If $\abar$ is a tuple of elements of $\A$, we abuse notation and write $\abar\in \A$.
We let $D_\A(\abar)$ be the $\L\upto k$-atomic diagram of $\abar$ in $\A$, where $k$ is the length of $\abar$.
Notice that there are only finitely many atomic formulas which  use only the first $k$ symbols in $\L$ and $k$ variables.

\begin{definition}\label{def: bnf}
We now define the {\em $\a$-back-and-forth relations} on tuples of $\L$-structures by induction on $\a$.
Let $\A,\B$ be two $\L$-structures, and let $\abar\in\A$, $\bbar\in\B$ be tuples of the same length $k$.
We say that $(\A,\abar)\leq_0(\B,\bbar)$ if $\abar$ and $\bbar$ satisfy the same $\L\upto k$-atomic formulas (i.e., if $D_\A(\abar)=D_\B(\bbar)$).
We say that $(\A,\abar)\leq_{\a}(\B,\bbar)$ if for every $\dbar\in\B$ and every $\g<\a$, there exists $\cbar\in\A$ such that $(\A,\abar\cbar)\geq_\g(\B,\bbar\dbar)$.
\end{definition}

The following theorem states three equivalent definitions of these relations showing their naturality.
For a tuple $\abar\in\A$, {\em the $\Pii_\a$-type of $\abar$ in $\A$} (denoted by $\Pii_\a\tp_\A(\abar)$) is the set of all infinitary $\Pii_\a$ formulas true of $\abar$ in $\A$.

\begin{theorem}[{Karp; Ash and Knight \cite[15.1, 18.6]{AK00}}]  \label{thm: def bnf}
For $\a\geq 1$, the following are equivalent.
\begin{enumerate}
\item $(\A,\abar)\leq_{\a}(\B,\bbar)$,
\item $\Pii_\a\tp_\A(\abar) \subseteq \Pii_\a\tp_\B(\bbar)$,
\item If we are given a structure $(\C,\cbar)$ that we know is isomorphic to either $(\A,\abar)$ or  $(\B,\bbar)$, deciding whether it is isomorphic to $(\A,\abar)$ is (boldface) ${\mathbf \Si}^0_\a$-hard.
That is, for every ${\mathbf \Si}^0_\a$ subset $\Cfr\subseteq 2^\om$, there is a continuous operator $F\colon 2^\om\to\KK$ such that, $F(X)$ produces a copy of $(\A,\abar)$ if $X\in \Cfr$, and a copy of $(\B,\bbar)$ otherwise.   \label{part: def bnf 3}
\end{enumerate}
\end{theorem}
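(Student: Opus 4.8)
The plan is to establish the three implications $(1)\Leftrightarrow(2)$, $(3)\Rightarrow(2)$, and $(2)\Rightarrow(3)$. The equivalence $(1)\Leftrightarrow(2)$ is a routine transfinite induction; $(3)\Rightarrow(2)$ is a short complexity obstruction; and $(2)\Rightarrow(3)$, which I would carry out by an Ash-style pairs-of-structures construction, is the \emph{main obstacle}.

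For $(1)\Leftrightarrow(2)$ I would induct on $\a\geq 1$, proving simultaneously (by negation-duality) that $(\A,\abar)\leq_\a(\B,\bbar)$ holds iff $\Pii_\a\tp_\A(\abar)\subseteq\Pii_\a\tp_\B(\bbar)$ iff $\Sii_\a\tp_\B(\bbar)\subseteq\Sii_\a\tp_\A(\abar)$. At the base level $\a=1$, $(\A,\abar)\leq_1(\B,\bbar)$ unwinds to the statement that every atomic diagram $D_\B(\bbar\dbar)$ of a finite extension of $\bbar$ equals some $D_\A(\abar\cbar)$, which is precisely containment of the existential ($\Sii_1$) types. For the inductive step, write a $\Pii_\a$ formula as $\bigwedge_i\forall\ybar_i\,\psi_i$ with each $\psi_i$ a $\Sii_{\g_i}$ formula, $\g_i<\a$. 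For $(1)\Rightarrow(2)$ the back-and-forth clause supplies, for each $\dbar\in\B$ and each $\g_i<\a$, a $\cbar\in\A$ with $(\B,\bbar\dbar)\leq_{\g_i}(\A,\abar\cbar)$, and the induction hypothesis then transfers the true formula $\psi_i(\abar\cbar)$ to $\psi_i(\bbar\dbar)$. For $(2)\Rightarrow(1)$ I argue contrapositively: if the back-and-forth clause fails at some $\dbar$ and $\g<\a$, then for every $\cbar\in\A$ the induction hypothesis yields a $\Pii_\g$ formula $\theta_\cbar$ true of $\bbar\dbar$ but false of $\abar\cbar$, and $\exists\ybar\bigwedge_\cbar\theta_\cbar$ is a $\Sii_\a$ formula (using that $\A$ is countable) true of $\bbar$ but false of $\abar$, whose negation is a $\Pii_\a$ formula violating $(2)$.

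For $(3)\Rightarrow(2)$ I would first note that each fixed $\Pii_\a$ formula $\varphi(\xbar)$ defines a ${\mathbf \Pi}^0_\a$ condition on the atomic diagram of a presented structure (induction on $\a$: atomic is clopen, $\forall$ and $\bigwedge$ give countable intersections, and the $\Sii_{<\a}$ subformulas are ${\mathbf \Si}^0_{<\a}$). If $(2)$ failed there would be a $\Pii_\a$ formula $\varphi$ with $\A\models\varphi(\abar)$ and $\B\not\models\varphi(\bbar)$. Applying $(3)$ to a properly ${\mathbf \Si}^0_\a$ set $\Cfr$ and composing the witnessing continuous $F$ with the ${\mathbf \Pi}^0_\a$ test ``$F(X)\models\varphi$'' would exhibit $\Cfr$ as ${\mathbf \Pi}^0_\a$, contradicting ${\mathbf \Si}^0_\a\neq{\mathbf \Pi}^0_\a$.

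The hard direction is $(2)\Rightarrow(3)$: given a ${\mathbf \Si}^0_\a$ set $\Cfr$, build a continuous $F$ with $F(X)\isom(\A,\abar)$ when $X\in\Cfr$ and $F(X)\isom(\B,\bbar)$ otherwise. I would prove this by induction on $\a$ following Ash's metatheorem \cite[Ch.~14, 18.6]{AK00}: construct $\C(X)$ on domain $\om$ by finite stages, by default copying $\B$ (with $\bbar\mapsto\cbar$), and switch to copying $\A$ once the approximation to ``$X\in\Cfr$'' is confirmed. At $\a=1$, $\Cfr$ is open and a finite trigger forces the switch: the finite diagram built so far equals some $D_\A(\abar\cbar')$ by $\leq_1$, so it reinterprets as an initial segment of $\A$ and completes to a copy of $\A$. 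In the inductive step $\Cfr=\bigcup_n\Cfr_n$ with $\Cfr_n$ a ${\mathbf \Pi}^0_{\g_n}$ set, $\g_n<\a$; confirming $\Cfr_n$ is itself a limit event handled by a sub-construction, and the recursive clause of $(\A,\abar)\leq_\a(\B,\bbar)$ supplies at each extension $\dbar$ and level $\g_n$ a matching $\cbar$ with the relation \emph{reversed}, so the induction hypothesis runs the sub-construction with the roles of the two structures swapped; this reversal matches the complementation in passing from ${\mathbf \Si}^0_\a$ to its ${\mathbf \Pi}^0_{<\a}$ building blocks. The bulk of the work, and the main obstacle, is the bookkeeping that keeps every finite stage simultaneously consistent with an eventual copy of whichever structure the final value of the guess dictates---this is exactly the content of the Ash--Knight machinery, which I would invoke rather than reprove.
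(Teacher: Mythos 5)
Your proposal is correct and takes essentially the same route as the paper, which offers no proof of its own but simply cites Karp's theorem \cite[15.1]{AK00} for the equivalence of (1) and (2) and Ash's pairs-of-structures theorem \cite[18.6]{AK00}, relativized to obtain the boldface continuous-operator form, for the equivalence with (3) --- precisely the decomposition you describe, including the deferral of the hard direction $(2)\Rightarrow(3)$ to the Ash--Knight metatheorem. Your complexity obstruction for $(3)\Rightarrow(2)$ and the transfinite induction for $(1)\Leftrightarrow(2)$ are the standard arguments and are sound.
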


(Statement (\ref {part: def bnf 3}) is not exactly \cite[Theorem 18.6]{AK00}, but it can be derived from it by relativizing; see \cite{HMapprox}.)

The relation $\leq_\a$ is a pre-ordering on $\{(\A,\abar): \A\in\KK, \abar\in\A\}$, and it induces an equivalence relation and a partial ordering on the quotient as usual:
We let $(\A,\abar)\equiv_{\a}(\B,\bbar)$ if $(\A,\abar)\leq_{\a}(\B,\bbar)$ and $(\A,\abar)\geq_{\a}(\B,\bbar)$.
We define $\BFI{\a,k}(\KK)$ to be the quotient partial ordering:
\[
\BFI{\a,k}(\KK)  =  \frac{\{(\A,\abar): \A\in\KK, \abar\in\A^k\}}{\equiv_\a},
\]
which is  ordered by $\leq_\a$ in the obvious way.
We will write just $\BFI{\a,k}$ when $\KK$ is understood.
We let $\BFI\a=\bigcup_{k\in\om}\BFI{\a,k}$.
If $\si$ is the $\equiv_\a$-equivalence class of $(\A,\abar)$, we say that $\abar$ has {\em $\a$-bf-type $\si$}.

Notice that when $T$ is scattered and $\KK$ is the class of models of $T$, the sets $\BFI{\a,k}(\KK)$ are all countable.
In \cite{McountingBF} the author analyzed some computability theoretic properties of $\KK$ that depend on whether $\BFI\a(\KK)$ is countable or not.
One of the ideas, that started in \cite{McountingBF} and that we wish to impart in this paper too, is that the partial ordering $(\BFI\a(\KK),\leq_\a)$ can give useful information about $\KK$.

We define some operations on $\BFI\a$.
These are all very basic operations.

First, we observe that if $\b<\a$, then $(\A,\abar)\equiv_{\a}(\B,\bbar)$ implies $(\A,\abar)\equiv_{\b}(\B,\bbar)$.
Thus, there is a natural projection from $\BFI\a$ to $\BFI\b$.
Given $\si\in\BFI\a$, we let $(\si)_\b\in\BFI\b$ be the image of this projection, i.e., $(\si)_\b$ satisfies that for every $(\A,\abar)$ of $\a$-bf-type $\sigma$, $(\A,\abar)$ has $\b$-bf-type $(\si)_\b$.

Secondly, we consider the operation of re-arranging a tuple.
For $k\leq \ell$, given $\abar=(a_1,...,a_\ell)\in A^\ell$ and  $\iota = (\iota_1,...,\iota_k)\in \{1,...,\ell\}^k$, let $\pi_\iota(\abar)=(a_{\iota_1},...,a_{\iota_k})\in A^k$.
Of course, $(\A,\abar)\equiv_{\a}(\A,\bbar)$ implies $(\A,\pi_\iota(\abar))\equiv_{\a}(\B,\pi_\iota(\bbar))$.
Thus, $\iota$ induces a natural projection from $\BFI{\a,\ell}$ to $\BFI{\a,k}$.
Given $\si\in\BFI{\a,\ell}$ and $\iota \in \{1,...,\ell\}^k$, we let $\pi_\iota(\si)\in\BFI{\a,k}$ be the image of this projection, i.e., $\pi_\iota(\si)$ satisfies  that for every $(\A,\abar)$ of $\a$-bf-type $\sigma$, $(\A,\pi_\iota(\abar))$ has $\a$-bf-type $\pi_\iota(\si)$.

The simplest case for the operation above is when we want to consider an initial segment of a tuple.
If $\iota=(1,...,k)\in \{1,...,\ell\}^k$, then $\pi_\iota(a_1,...,a_\ell)=(a_1,...,a_k)$.
In this case, if  $\pi_\iota(\si)=\tau$, we write $\tau\subseteq \si$.

We use $|\cdot|$ to denote the size of a tuple.
That is, if $\si$ is the $\b$-bf-type of a tuple $\abar=(a_1,...,a_k)$, we let $|\si|=|\abar|= k$.

Last, we define a less trivial operation that is key to understanding the structure of the bf-types.
For $\b$ and $\abar\in\A\in\KK$, we define the set $\ext_\b(\A,\abar)\subseteq \BFI{\b}(\KK)$ to be the $\leq_{\b}$-downward closure in $\BFI{\b}(\KK)$ of the set of $\b$-bf-types of tuples of the form $(\A,\abar\dbar)$. 
The important feature of this definition is that  
\[
 (\A,\abar)\leq_\g(\B,\bbar) \quad \iff \quad (\forall\b<\g)\ \ext_\b(\A,\abar)\supseteq\ext_\b(\B,\bbar),
\]
which follows immediately from the definitions of $\leq_\g$ and $\ext_\b$.

Now, given $\sigma\in\BFI\g(\KK)$ and $\b<\g$, we let $\ext_\b(\sigma)=\ext_\b(\A,\abar)$ for any$(\A,\abar)$ of $\a$-bf-type $\sigma$.
It follows that for $\sigma,\tau\in\BFI\a(\KK)$, 
\[
\sigma\leq_\g \tau \quad \iff \quad (\forall\b<\g)\ \ext_\b(\si)\supseteq\ext_\b(\tau).
\]

This definition is off by one from the definition in \cite{McountingBF}--what there is called $\ext_\a$, here is called $\ext_{\a-1}$.
The old definition does not work well when $\a$ is a limit ordinal, a case not considered in \cite{McountingBF}.


\begin{definition} \label{def: b&f structure}
Given an ordinal $\a$, we refer to the  family of structures 
\[
\left\{ (\BFI {\b,k}(\KK); \leq_\b)\ : \ \ \b\leq \a, k\in\om\right\},
\]
together with 
\begin{itemize}
\item the projections $(\cdot)_{\b}\colon \BFI{\geq \b}\to\BFI{\b}$ for all $\b<\a$;
\item the projections $\pi_\iota(\cdot)\colon \BFI{\b,k}\to\BFI{\b,\ell}$ for all $\b\leq \a, k,\ell\in\om, \iota\in \{1,...,\ell\}^k$; 
\item the binary relations $\{(\tau,\si): \tau\in \ext_\b(\si)\} \subseteq \BFI{\b}\times\BFI{>\b}$ for all $\b<\a$; and
\item the map $D(\cdot)$ that assigns to each 0-bf-type $\sigma$, its $\L\upto {|\sigma|}$-atomic diagram, (that is, $D_\A(\abar)$ for any $(\A,\abar)$ with $0$-bf-type $\si$,)
\end{itemize}
as the {\em $\a$-back-and-forth structure of $\KK$}.

The $\a$-back-and-forth structure of $\KK$ can be viewed as a single 1st-order structure (over a language that is computable in any given presentation of the ordinal $\a$).
We say that $\KK$ has a {\em computable $\a$-back-and-forth structure} if this structure has a computable presentation.

We will sometimes refer to the $\a$-back-and-forth structure as the {\em $\a$-bf-structure}.
We define the {\em $(<\a)$-bf-structure} of a class in the obvious ways considering $(\BFI {\b,k}(\KK); \leq_\b)$ only for $\b<\a$.
\end{definition}

For the case when $\a<\om$, this definition is essentially the same as the one given in \cite[2.3]{McountingBF}, except for some minor cosmetic changes.

We do know some examples of nice classes of structures where the $\a$-bf-structure is computable.
In \cite{McountingBF} we show that for the class of linear orderings, the 2-bf-structure is computable and for the class of equivalence structures, the 1-bf-structure is computable.
Harris and the author \cite{HM} show that the $(<\om)$-bf-structure of Boolean algebras is computable.

\subsection{The extended language} \label{ss: extended language}

\begin{definition}
Let $\Bb$ be a presentation of the  $\a$-bf-structure of $T$.
Let 
\[
\L^\Bb_\a=\L\cup\{\bfphi_\si: \si\in \BFI{\leq \a}\},
\]
where each $\bfphi_\si$ is a $|\si|$-ary relation.
Note that if $\BB$  is $X$-computable, $\L^\Bb_\a$ is an $X$-computable language.
\end{definition}

If $\A$ is a model of $T$, we let $\A_{(\a)}$ be the extension of $\A$ to an $\L^\Bb_\a$-structure defined as follows:
For each $\si\in\BFI{\b}$,
\[
\A_{(\a)}\models\bfphi_\si(\xbar)\iff \si\leq_\b(\A,\xbar),
\]
(where $\si\leq_\b(\A,\xbar)$ means that $(\B,\bbar)\leq_\b(\A,\abar)$ for all $(\B,\bbar)$ which have $\b$-bf-type $\si$).

We showed in \cite{McountingBF} that if $\Bb$ is $X$-computable, and $\si\in\BFI{\b}$, then $\bfphi_\si$ is equivalent to a $\Pico{X}_\b$-computable formula.
The reason is that for $\b=0$, $\bfphi_\si(\xbar) \iff$ ``$D(\xbar)=D(\si)$,'' and for $\b>0$, 
\begin{equation}
\bfphi_\si(\xbar)\ \ \iff\ \ 
	\bigwedge_{\g<\b}\bigwedge_{\substack{\tau\in \BFI{\g}\\ \tau\not\in\ext_\g(\si)}} \forall\ybar \neg \bfphi_\tau(\xbar,\ybar).
\end{equation}
It follows that if $X$ computes a copy of $\A$, then $\A_{(\a)}$ has a copy which is $\Delta^0_{\a+1}(X)$.

We also showed in \cite{McountingBF} that the formulas $\{\bfphi_\si: \si\in \BFI{\leq \a}\}$ form a complete set of $\Pii_\a$-formulas for $\KK$, in the sense that every $\Sii_{\a+1}$-$\L$-formula is equivalent (in all structures in $\KK$) to a $\Sii_1$-$\L_\a^\BB$-formula.

For $\b<\a$ and $\tau\in \BFI{\b}$, we define $\bfpsi_\tau(\xbar)$ by 
\[
\A\models\bfpsi_\tau(\xbar)\iff \tau\equiv_\b(\A,\xbar).
\]
Since $(\A,\abar)\leq_\a (\C,\cbar)$ implies $(\A,\abar)\equiv_\b (\C,\cbar)$ for any $\b<\a$, we can define $\bfpsi_\tau$ in $\L_\a$ by
\begin{equation}
\bfpsi_\tau(\xbar) \ := \ \bigvee_{\substack{\si\in \BFI{\a}, \\ (\si)_\b=\tau}} \bfphi_\si(\xbar).
\end{equation}
On the other hand, using that $(\A,\abar)\leq_\b \si$ if and only if, for all $\g<\b$, $\ext_\g(\si)\subseteq \ext_\g(\A,\abar)$, we get that
\begin{equation}   \label{eq: def psi}
\bfpsi_\si(\xbar)  \ \ \iff \ \  \bfphi_\si(\xbar) \wedge \bigwedge_{\g<\b}\bigwedge_{\tau\in\ext_\g(\si)} \exists \ybar \bfphi_\tau(\xbar,\ybar).
\end{equation}

Now, we define a theory that captures all this.

\begin{definition}
Let $T^\Bb_\a$ be the following family of sentences:
\begin{enumerate}\renewcommand{\theenumi}{T{\arabic{enumi}}}
\item (the existence and uniqueness of bf-types for all tuples) \label{axioms: part 1}
	\begin{itemize}  
	\item $\forall \xbar \bigvee_{\si\in \BFI{\a}}  \bfphi_\si(\xbar)$,
	\item $\bigwedge_{\b<\a}\bigwedge_{k\in\om}\bigwedge_{\tau,\tau'\in\BFI{\b,k}, \tau\neq\tau'}\forall \xbar\ \neg\left(\bfpsi_\tau(\xbar)\wedge\bfpsi_{\tau'}(\xbar) \right)$.
	\end{itemize}
\item (the recursive definition of $\bfphi_\si$)\label{axioms: part 2}
	\begin{itemize}
	\item $\bigwedge_{\si\in \BFI{0}} \ (\bfphi_\si(\xbar) \iff D(\xbar)=D(\si))$,
	\item $\bigwedge_{\b\leq\a}\bigwedge_{\si\in \BFI{\b}} \forall \xbar\ \left(\bfphi_\si(\xbar)\iff \bigwedge_{\g<\b}\bigwedge_{\tau\in \BFI{\g}\setminus\ext_\g(\si)} \forall\ybar \neg \bfphi_\tau(\xbar,\ybar)\right)$,
	\end{itemize}
\item (and the implications of $\bfpsi_\si$)\label{axioms: part 3}
	\begin{itemize}
	\item $\bigwedge_{\b<\a}\bigwedge_{\si\in \BFI{\b}} \forall \xbar\ \left(\bfpsi_\si(\xbar) \implies \bfphi_\si(\xbar) \wedge \bigwedge_{\g<\b}\bigwedge_{\tau\in\ext_\g(\si)} \exists \ybar \bfphi_\tau(\xbar,\ybar)\right)$.
	\end{itemize}
\end{enumerate}

We will write $\L_\a$ and $T_\a$ instead of $\L_\a^\Bb$ and $T_\a^\Bb$ when $\Bb$ is understood from the context.
We write $\L_{<\a}$ for $\bigcup_{\b<\a}\L_\b$.
\end{definition}

First, we observe that $T_\a$ is a  $\Pii_2$ theory.
Furthermore, it is $\Pic_2$ relative to the given presentation of $\Bb$.
Notice that for every model  $\A\models T$ we have that $\A_{(\a)}\models T_\a$.
However, if $\a$ is not large enough, $T_\a$ does not necessarily imply  $T$.
We will show that if $T$ is a $\Pii_\b$ theory for some $\b\leq \a$, then $T_\a$ does imply $T$.
So, for such $\a$ we have that the models of $T_\a$ are in a one-to-one correspondence with the models of $T$.
In particular, if $T$ is a counterexample to Vaught's conjecture, so is $T_\a$,  with the added property that $T_\a$ is $\Pii_2$.

\begin{lemma}\label{le: T alpha}
Fix an ordinal $\a$.
Let  $\si\in \BFI{\b}$ for some $\b\leq\a$, let $\B\models T_\a$, and let $\bbar\in \B$. 
\begin{itemize}
\item If $\B\models \bfphi_\si(\bbar)$,  then $\si\leq_\b(\B,\bbar)$. 
\item If $\b<\a$ and $\B\models \bfpsi_\si(\bbar)$,  then $\si\equiv_\b(\B,\bbar)$.
\end{itemize}
\end{lemma}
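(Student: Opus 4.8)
The plan is to prove, by induction on $\b\leq\a$, the full biconditional
\[
\B\models\bfphi_\si(\bbar)\quad\Longleftrightarrow\quad \si\leq_\b(\B,\bbar)\qquad(\si\in\BFI{\b},\ \bbar\in\B),
\]
where the right-hand side refers to the back-and-forth relations computed in the $\L$-reduct of $\B$. The first item of the lemma is exactly the forward implication. I expect that proving \emph{only} the forward direction in isolation will not close the induction: to verify it at level $\b$ one must know that the actual $\g$-bf-type of a one-point extension $(\B,\bbar\dbar)$ is itself named by some $\bfphi$ that $\B$ satisfies, and this is precisely the backward implication at level $\g<\b$. So the two directions must be carried simultaneously, and this coupling is the main thing to get right.

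The base case $\b=0$ is immediate from the first bullet of axiom (\ref{axioms: part 2}): $\B\models\bfphi_\si(\bbar)$ iff $D(\bbar)=D(\si)$, which is the same as $\si\equiv_0(\B,\bbar)$. For the inductive step I would use the recursive biconditional axiom (second bullet of (\ref{axioms: part 2})), which says that $\B\models\bfphi_\si(\bbar)$ holds exactly when $\B\models\neg\bfphi_\tau(\bbar\dbar)$ for every $\g<\b$, every $\tau\in\BFI{\g}\setminus\ext_\g(\si)$, and every $\dbar\in\B$. Combined with the characterization $\si\leq_\b(\B,\bbar)\Leftrightarrow\forall\g<\b\ \ext_\g(\si)\supseteq\ext_\g(\B,\bbar)$ and the downward-closedness of the $\ext$-sets, both implications reduce to local facts about the $\g$-bf-types of the extensions $(\B,\bbar\dbar)$.

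Concretely, for the backward direction at level $\b$ I assume $\si\leq_\b(\B,\bbar)$ and, toward the axiom, suppose $\B\models\bfphi_\tau(\bbar\dbar)$ for some $\g<\b$ and some $\tau$; the forward induction hypothesis at $\g$ gives $\tau\leq_\g(\B,\bbar\dbar)$, so $\tau$ lies below the genuine $\g$-type of $(\B,\bbar\dbar)$, which belongs to $\ext_\g(\B,\bbar)\subseteq\ext_\g(\si)$; hence $\tau\in\ext_\g(\si)$ and the axiom is satisfied, giving $\B\models\bfphi_\si(\bbar)$. For the forward direction at level $\b$ I assume $\B\models\bfphi_\si(\bbar)$ and, fixing $\g<\b$ and $\dbar$, let $\mu$ be the true $\g$-bf-type of $(\B,\bbar\dbar)$; the backward induction hypothesis at $\g$ (applied to $\mu\leq_\g(\B,\bbar\dbar)$, which is reflexive) yields $\B\models\bfphi_\mu(\bbar\dbar)$, and then the axiom forces $\mu\in\ext_\g(\si)$. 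As $\g,\dbar$ were arbitrary this gives $\ext_\g(\si)\supseteq\ext_\g(\B,\bbar)$ for all $\g<\b$, i.e. $\si\leq_\b(\B,\bbar)$. Note every appeal to the hypothesis is at a level strictly below $\b$, so the induction is well-founded.

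Finally, the second item follows from the first. Since $\bfpsi_\si$ is the disjunction of $\bfphi_{\si'}$ over top-level $\si'$ with $(\si')_\b=\si$, if $\B\models\bfpsi_\si(\bbar)$ then $\B\models\bfphi_{\si'}(\bbar)$ for such a $\si'$, so by the first item $\si'\leq_\a(\B,\bbar)$, and since $\leq_\a$ restricts to $\leq_\b$ for $\b<\a$ (immediate from the $\ext$-characterization), passing to the $\b$-projection gives $\si\leq_\b(\B,\bbar)$. For the reverse inequality I invoke axiom (\ref{axioms: part 3}): from $\B\models\bfpsi_\si(\bbar)$ it supplies, for each $\g<\b$ and each $\tau\in\ext_\g(\si)$, some $\dbar$ with $\B\models\bfphi_\tau(\bbar\dbar)$; by the first item $\tau\leq_\g(\B,\bbar\dbar)$, so $\tau\in\ext_\g(\B,\bbar)$. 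Thus $\ext_\g(\si)\subseteq\ext_\g(\B,\bbar)$ for all $\g<\b$, which is $(\B,\bbar)\leq_\b\si$; together with $\si\leq_\b(\B,\bbar)$ this gives $\si\equiv_\b(\B,\bbar)$, as required.
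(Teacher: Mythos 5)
Your overall strategy---a simultaneous transfinite induction driven by the recursive axiom (\ref{axioms: part 2})---is close to the paper's, and your base case, your backward implication, and your derivation of the second item from the first are essentially sound. The genuine gap is in the forward direction of your biconditional at level $\b$. There you write ``let $\mu$ be the true $\g$-bf-type of $(\B,\bbar\dbar)$'' and then apply the backward induction hypothesis to conclude $\B\models\bfphi_\mu(\bbar\dbar)$. But $\BFI{\g}$ consists only of the $\equiv_\g$-classes realized by tuples in models of $T$, whereas $\B$ is merely a model of the $\L_\a$-theory $T_\a$: nothing you have established guarantees that the $\equiv_\g$-class of $(\B,\bbar\dbar)$ is an element of $\BFI{\g}$ at all, so $\bfphi_\mu$ need not even be a formula of the language and the backward hypothesis does not apply to it. This is not cosmetic: your argument never uses axiom (\ref{axioms: part 1}), and without that axiom the first item of the lemma is false---an extension $(\bbar,\dbar)$ whose atomic diagram does not occur in $\KK$ satisfies no $\bfphi_\tau$ at level $0$, hence places no constraint in the level-$1$ instance of (\ref{axioms: part 2}), yet it witnesses the failure of $\si\leq_1(\B,\bbar)$. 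So some use of (\ref{axioms: part 1}) to force every tuple of $\B$ to realize a type from $\KK$ must enter the induction, and your proof as written has no step that delivers this.

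The paper closes exactly this hole by carrying the \emph{second} bullet (the $\bfpsi$-statement) through the induction in place of your backward implication for $\bfphi$: axiom (\ref{axioms: part 1}) supplies some $\tau\in\BFI{\g}$ with $\B\models\bfpsi_\tau(\bbar,\dbar)$, the inductive hypothesis for $\bfpsi$ at level $\g$ yields $(\B,\bbar\dbar)\equiv_\g\tau$---which is precisely the missing fact that the true type of the extension lies in $\BFI{\g}$---and then (\ref{axioms: part 3}) and (\ref{axioms: part 2}) force $\tau\in\ext_\g(\si)$. If you replace the backward half of your biconditional by the $\bfpsi$-clause of the lemma (whose proof you essentially already have in your final paragraph), your induction goes through and coincides with the paper's.
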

\begin{proof}
The proof is by simultaneous transfinite induction on $\b$.
The lemma is clearly true for the case for $\b=0$ because $\si$ determines $D_\B(\bbar)$.

Suppose first that $\B\models \bfphi_\si(\bbar)$.
Let $(\A,\abar)$ have $\b$-bf-type $\si$.
Consider $\g<\b$ and let $\dbar\in \B$; we need to find $\cbar\in\A$ such that $(\A,\abar,\cbar)\geq_\g(\B,\bbar,\dbar)$.
Axiom (\ref{axioms: part 1}) implies that there exists a $\tau\in\BFI{\g}$ such that $\B\models\bfpsi_\tau(\bbar,\dbar)$.
By the inductive hypothesis we have that $(\B,\bbar,\dbar)\equiv_\g \tau$.
Using (\ref{axioms: part 3}), we get that also $\B\models\bfphi_\tau(\bbar,\dbar)$, and since $(\B,\bbar)\models \bfphi_\si(\bbar)$, using the axiom (\ref{axioms: part 2}), we have that $\tau\in\ext_\g(\si)$.
Since $(\A,\abar)\equiv_\b\si$, we have that there exists $\cbar$ such that $(\A,\abar,\cbar)\geq_\g\tau$, and hence  $(\A,\abar,\cbar)\geq_\g  (\B,\bbar,\dbar)$ as wanted.

Suppose now that $\B\models\bfpsi_\si(\bbar)$.
From (\ref{axioms: part 3}) we get that also $\B\models\bfphi_\si(\bbar)$, and hence that $\si\leq_\b(\B,\bbar)$.
We now need to show the other inequality; that is, that for every $\cbar\in \A$ and every $\g<\b$, there exist $\dbar\in \B$ such that $(\B,\bbar,\dbar)\geq_\g(\A,\abar,\cbar)$.
Since $(\A,\abar)\equiv_\b\si$, we have that there exists $\tau\in\ext_\g(\si)$ such that $(\A,\abar,\cbar)\equiv_\g\tau$.
Since  $\B\models\bfpsi_\si(\bbar)$, by (\ref{axioms: part 3}), there is a $\dbar$ such that $\B\models\bfphi_\tau(\bbar,\dbar)$.
Then, by the inductive hypothesis, $(\B,\bbar,\dbar)\geq_\g\tau \equiv_\g (\A,\abar,\cbar)$ as wanted.
\end{proof}

\begin{corollary}
If $T$ is a $\Pii_{\a}$ theory and $\C\models T_\a$, then $\C$ models $T$.
\end{corollary}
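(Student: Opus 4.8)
The plan is to extract from $\C$ an honest model of $T$ lying $\a$-back-and-forth below $\C$, and then transport the $\Pii_\a$-axiom $T$ across that inequality via the characterization of $\leq_\a$ in Theorem \ref{thm: def bnf}. Throughout, $\KK$ is the class of models of $T$, so $\BFI{\a,0}(\KK)$ is precisely the set of $\a$-bf-types of the empty tuple realized in genuine models of $T$; this is the fact that lets the argument close.

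First I would apply the existence half of axiom (\ref{axioms: part 1}) to the empty tuple $\la\ra$: this produces a type $\si\in\BFI{\a,0}$ with $\C\models\bfphi_\si(\la\ra)$. By the very definition of $\BFI{\a,0}(\KK)$, I may fix a model $\A\models T$ whose empty tuple has $\a$-bf-type $\si$. Next I would invoke the first clause of Lemma \ref{le: T alpha} with $\b=\a$, $\B=\C$, $\bbar=\la\ra$: from $\C\models\bfphi_\si(\la\ra)$ it yields $\si\leq_\a(\C,\la\ra)$, which by the meaning of that notation says $(\A,\la\ra)\leq_\a(\C,\la\ra)$, since $(\A,\la\ra)$ has $\a$-bf-type $\si$. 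Now Theorem \ref{thm: def bnf} (equivalence of $\leq_\a$ with containment of $\Pii_\a$-types) gives $\Pii_\a\tp_\A(\la\ra)\subseteq\Pii_\a\tp_\C(\la\ra)$; that is, every $\Pii_\a$ sentence true in $\A$ is true in $\C$. Finally, since $T$ is itself a $\Pii_\a$ sentence and $\A\models T$, we have $T\in\Pii_\a\tp_\A(\la\ra)$, hence $T\in\Pii_\a\tp_\C(\la\ra)$, and therefore $\C\models T$.

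The step I expect to require the most care is the middle one: confirming that the abstract inequality $\si\leq_\a(\C,\la\ra)$ delivered by Lemma \ref{le: T alpha} genuinely records $(\A,\la\ra)\leq_\a(\C,\la\ra)$ for a chosen witness $\A\models T$, and in particular that such a witness exists — i.e. that the $0$-ary types appearing in axiom (\ref{axioms: part 1}) are exactly those realized by actual models of $T$, rather than merely formal symbols of $\L_\a$. Once this identification is in hand, the dictionary between $\leq_\a$ and inclusion of $\Pii_\a$-types does all the remaining work. The one degenerate case is $\a=0$, where Theorem \ref{thm: def bnf} does not apply; there one argues directly, using the clause $D(\cdot)$ together with axiom (\ref{axioms: part 2}) to conclude that $\C$ and $\A$ realize the same atomic diagram at $\la\ra$, so that $\C$ satisfies the quantifier-free sentence $T$.
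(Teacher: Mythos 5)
Your proof is correct and follows essentially the same route as the paper's: extract $\si\in\BFI{\a,0}$ from axiom (\ref{axioms: part 1}), pick $\A\models T$ realizing $\si$, use Lemma \ref{le: T alpha} to get $\A\leq_\a\C$, and transfer the $\Pii_\a$ sentence $T$ across that inequality. The extra care you take over the identification of $\si$ with an actual model of $T$ and over the degenerate case $\a=0$ is fine but not needed beyond what the paper already records.
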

\begin{proof}
By (\ref{axioms: part 1}), there is some $\si\in\BFI{\a,0}$ such that $\C\models \bfphi_\si$.
Let $\A\models T$ have $\a$-bf-type $\si$.
By the previous lemma $\A\leq_\a\C$, so $\C$ satisfies all the $\Pii_\a$ sentences true in $\A$.
It follows that $\C\models T$.
\end{proof}

\subsection{Building models out of back-and-forth structures}

In this section we prove a key lemma for our proofs in Section \ref{se: main}, namely that we can use the bf-structure to computably build structures with a prescribed bf-type (Lemma \ref{lemma: building models}).

Given a class of structures $\KK$, let $\KK^{fin} = \{D_\A(\abar): \abar\in \A, \A\in\KK\}$.
That is, $\KK^{fin}$ is the set of all finite atomic diagrams over finite restrictions of the language which  occurr in $\KK$.

\begin{lemma}\cite{McountingBF}  \label{le: builing pi 2}
If $T$ is $\Pic_2$, $\KK$ is the set of all models of $T$, and $\KK^{fin}$ is c.e., then $T$ has a computable model.
\end{lemma}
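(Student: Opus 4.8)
The plan is to build a computable model $\M$ of $T$ by a forcing-style construction that approximates its atomic diagram stage by stage, maintaining throughout the invariant that each finite approximation is an element of $\KK^{fin}$. Since $\KK^{fin}$ is c.e., we may enumerate it and search within it effectively, and this is exactly what will make the resulting $\M$ computable rather than merely existent. Write $T$, which is $\Pic_2$, as $\bigwedge_n \forall \xbar\, \psi_n(\xbar)$, where each $\psi_n$ is a $\Sic_1$ formula, say $\psi_n(\xbar)=\bigvee_k\exists\ybar\,\theta_{n,k}(\xbar,\ybar)$ with each $\theta_{n,k}$ a finitary quantifier-free formula (a Boolean combination of atomic formulas, as $\L$ is relational). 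We build $\M$ on domain $\om$ and meet two families of requirements: for each $m$, that $m$ belong to the domain (so that $\M$ is a total structure and, since $D_\A(\abar)$ covers $\L\upto{|\abar|}$, every atomic formula is eventually decided as tuples lengthen); and, for each $n$ and each tuple $\abar$ in $\M$, that $\M\models\psi_n(\abar)$.

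The heart of the argument is the following extension step. Suppose that after finitely many stages we have committed to a finite atomic diagram $D\in\KK^{fin}$ mentioning a tuple that includes $\abar$, and we wish to meet the requirement that $\psi_n(\abar)$ hold. I claim there is $D'\in\KK^{fin}$ extending $D$ by a fresh tuple $\cbar$ such that $D'$ decides $\theta_{n,k}(\abar,\cbar)$ positively for some $k$. Indeed, since $D\in\KK^{fin}$, we have $D=D_\A(\bar{e})$ for some $\A\in\KK$ and some tuple $\bar{e}\supseteq\abar$; because $\A\models T$ and $T$ is $\Pi_2$, we get $\A\models\psi_n(\abar)$, so there are a $k$ and a witness $\cbar\in\A$ with $\A\models\theta_{n,k}(\abar,\cbar)$, and then $D'=D_\A(\bar{e}\,\cbar)\in\KK^{fin}$ is as required. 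Since $\KK^{fin}$ is c.e. and, for fixed $D'$, $k$, and $\cbar$, the relation ``$D'$ forces $\theta_{n,k}(\abar,\cbar)$'' is decidable, we can \emph{find} such a triple by a dovetailed search through the enumeration of $\KK^{fin}$ and the indices $k$; the search halts because a witness provably exists. Meeting a domain requirement is similar but easier: we simply search for an extension in $\KK^{fin}$ of larger tuple length.

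With this step in hand, the construction is a routine injury-free dovetailing: fix an effective enumeration of all requirements (dovetailing over $n$ and over tuples from the domain built so far, together with the domain requirements), and at each stage meet the next one by applying the extension step to the current finite diagram $D_s$, producing $D_{s+1}\in\KK^{fin}$. No action ever injures a previously met requirement, because the relevant $\theta_{n,k}$ are quantifier-free and the diagram only grows, so once $\theta_{n,k}(\abar,\cbar)$ is forced it stays forced. Let $\M=\bigcup_s D_s$. Each element of $\om$ eventually enters the domain and every atomic formula about it is eventually decided, so $\M$ is a genuine $\L$-structure, and it is computable because the $D_s$ were produced effectively. Finally $\M\models T$: for every $n$ and every $\abar\in\M$ the corresponding requirement was met, yielding $k$ and $\cbar$ with $\M\models\theta_{n,k}(\abar,\cbar)$, hence $\M\models\psi_n(\abar)$; as this holds for all $n$ and all $\abar$, we conclude $\M\models\bigwedge_n\forall\xbar\,\psi_n(\xbar)=T$.

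The step I expect to be the crux is the extension step, and specifically the interplay between the two hypotheses there: it is the $\Pi_2$ form of $T$ that guarantees the needed existential witness is already present inside any model whose diagram restricts to $D$, and it is the c.e.-ness of $\KK^{fin}$ that upgrades the mere existence of a suitable $D'$ into an effective search. Maintaining the invariant $D_s\in\KK^{fin}$ throughout — so that the extension step keeps applying — is what ties the construction together; everything else is standard bookkeeping.
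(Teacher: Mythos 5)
Your proposal is correct and is essentially the same argument as the paper's: build an increasing chain of finite diagrams in $\KK^{fin}$, using the $\Pic_2$ form of $T$ to guarantee that a witnessing extension always exists inside some model realizing the current diagram, and the c.e.-ness of $\KK^{fin}$ to find it effectively. Your version just makes explicit some bookkeeping (the domain requirements and the persistence of quantifier-free facts) that the paper's sketch leaves implicit.
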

\begin{proof}
A full proof is given in \cite[Lemma 2.9]{McountingBF}; we sketch a proof here for completeness.

We will build a sequence of finite atomic diagrams $D_0\subseteq D_1\subseteq\cdots$, all in $\KK^{fin}$, and then define $\A$ by letting its atomic diagram be $D(\A)=\bigcup_s D_s$.
We will think of each $D_s$ as a finite $\L\upto |\A_s|$-structure $\A_s$ with $D(\A_s)=D_s$, and of $\A$ as $\bigcup_s\A_s$.

Write $T$ as $\bigwedge_{i\in\om}\forall\ybar_i \varphi_i(\ybar_i)$, where each $\varphi_i$ is $\Sic_1$.
At stage $s$, consider the least pair $(i,\abar)$, where $\abar$ is a tuple in $\A_s$ of size $|\ybar_i|$, that has not been considered so far. 
Search for a finite structure $\A_{s+1}\in \KK^{fin}$ extending $\A_s$ such that $\A_{s+1}\models \varphi_i(\abar)$.
Such a structure must exist because if $\B$ is a model of $T$ which has $\A_s$ as a substructure, then $\B\models \varphi_i(\abar)$, and hence some finite substructure of $\B$ does too.

In the limit, for every $i$ and every $\abar\in \A$, $\A\models\varphi_i(\abar)$, and hence $\A\models T$.
\end{proof}

\begin{lemma}\cite{McountingBF}\label{lemma: building models}
Let $\Bb$ be a presentation of the  $\a$-bf-structure of $T$ and $\si\in \BFI{\a,k}$.
Suppose that $T$ is $\Pii_\a$-axiomatizable.
There is a structure $(\A,\abar)$ of $\a$-bf-type $\si$ which is computable in $\Bb$.
\end{lemma}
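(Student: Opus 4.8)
The plan is to realize $\si$ as the $\a$-bf-type of a named tuple inside a model of $T_\a$ that we construct with the effective model-existence Lemma~\ref{le: builing pi 2}, and then read the tuple off the resulting $\Bb$-computable structure. First I would introduce $k$ fresh constant symbols $c_1,\dots,c_k$, note that the language $\L_\a^\Bb\cup\{c_1,\dots,c_k\}$ is still $\Bb$-computable, set $\cbar=(c_1,\dots,c_k)$, and consider the theory
\[
T_\a^\si \ = \ T_\a \ \cup\ \{\bfphi_\si(\cbar)\} \ \cup\ \{\exists\ybar\,\bfphi_\tau(\cbar,\ybar):\ \b<\a,\ \tau\in\ext_\b(\si)\}.
\]
Since $T$ is $\Pii_\a$-axiomatizable, the corollary following Lemma~\ref{le: T alpha} guarantees that every model of $T_\a$ is a model of $T$, hence a member of $\KK$; so any model of $T_\a^\si$ is an $\L_\a^\Bb$-expansion of a genuine structure in $\KK$ carrying a distinguished tuple $\cbar$.

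The crux is to check that in any $\C\models T_\a^\si$ the tuple $\cbar$ has $\a$-bf-type exactly $\si$. From $\C\models\bfphi_\si(\cbar)$ Lemma~\ref{le: T alpha} gives $\si\leq_\a(\C,\cbar)$, which by the $\ext$-characterization of $\leq_\a$ says $\ext_\b(\si)\supseteq\ext_\b(\C,\cbar)$ for all $\b<\a$. For the reverse inequality, fix $\b<\a$ and $\tau\in\ext_\b(\si)$; the corresponding existential axiom yields $\dbar$ with $\C\models\bfphi_\tau(\cbar,\dbar)$, so $\tau\leq_\b(\C,\cbar\dbar)$ by Lemma~\ref{le: T alpha}, whence $\tau\in\ext_\b(\C,\cbar)$. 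Thus $\ext_\b(\C,\cbar)\supseteq\ext_\b(\si)$ for every $\b<\a$, i.e.\ $(\C,\cbar)\leq_\a\si$, and combining the two inequalities gives $(\C,\cbar)\equiv_\a\si$. I expect this to be the one delicate point: requiring only $\bfphi_\si(\cbar)$ is \emph{not} enough, since $\bfphi_\si$ merely forces $\si\leq_\a(\C,\cbar)$ and does not exclude a genuine type strictly above $\si$; the downward existential requirements $\exists\ybar\,\bfphi_\tau(\cbar,\ybar)$ supply exactly the missing half.

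It then remains to verify the hypotheses of Lemma~\ref{le: builing pi 2} relative to $\Bb$. The theory $T_\a$ is $\Pic_2$ relative to $\Bb$, the sentence $\bfphi_\si(\cbar)$ is atomic, and each $\exists\ybar\,\bfphi_\tau(\cbar,\ybar)$ is $\Sic_1$ relative to $\Bb$, indexed by the $\Bb$-computable set $\{(\b,\tau):\b<\a,\ \tau\in\ext_\b(\si)\}$, so $T_\a^\si$ is again $\Pic_2$ relative to $\Bb$. For consistency (and to keep the finite-extension steps of the construction from failing) I would pick a genuine $(\A',\abar')\in\KK$ of $\a$-bf-type $\si$ and expand $\A'$ to $\A'_{(\a)}$ with $\cbar$ interpreted as $\abar'$; one checks directly that $\A'_{(\a)}\models T_\a^\si$, using $\ext_\b(\A',\abar')=\ext_\b(\si)$. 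Finally, writing $\KK'$ for the class of models of $T_\a^\si$, the set $(\KK')^{fin}$ is c.e.\ in $\Bb$: a finite extended atomic diagram (over $\L\upto m$ together with the $\bfphi$-relations and the constants) is realizable precisely when it is induced by some finite configuration of genuine bf-types with $\cbar$ assigned type $\si$, and since $\Bb$ computes $D(\cdot)$, the projections, and the relations $\ext_\b$ and $\leq_\b$, these diagrams can be enumerated from $\Bb$. Applying the relativized Lemma~\ref{le: builing pi 2} then produces a $\Bb$-computable $\C\models T_\a^\si$ in which the construction builds the named tuple $\cbar$; setting $(\A,\abar)=(\C,\cbar)$ gives a $\Bb$-computable structure together with a $\Bb$-computable tuple of $\a$-bf-type $\si$, as required.
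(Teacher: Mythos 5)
Your proposal is correct and follows essentially the same route as the paper: you form the same auxiliary theory $T_\a \cup \{\bfphi_\si(\cbar)\} \cup \{\exists\ybar\,\bfphi_\tau(\cbar,\ybar) : \tau\in\ext_\b(\si),\ \b<\a\}$, verify via Lemma~\ref{le: T alpha} and the $\ext$-characterization that $\cbar$ realizes exactly $\si$, check that the class of finite diagrams is c.e.\ in $\Bb$, and apply Lemma~\ref{le: builing pi 2}. Your explicit two-inequality argument that the existential axioms pin the type down to $\si$ (rather than something strictly above it) is exactly the point the paper handles by citing equation~(\ref{eq: def psi}) and the proof of Lemma~\ref{le: T alpha}.
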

\begin{proof}
Assume $\BB$ is computable.
Add to $\L_\a$ a finite tuple of constants $\cbar$ of size $|\si|$.
Now, consider
\[
T_{\a,\si} = T^\Bb_\a\  \cup\ \{\bfphi_\si(\cbar)\}\ \cup\ \{\bigwedge_{\g<\a}\bigwedge_{\tau\in\ext_\g(\si)} \exists \xbar \bfphi_\tau(\cbar,\xbar) \}.
\]
So, we have that if $(\A_{(\a)},\abar)\models T_{\a,\si}$, then $\A_{(\a)}$ models $T^\Bb_\a$, and hence $T$.
Furthermore, from equation (\ref{eq: def psi}) and the proof of Lemma \ref{le: T alpha} we get that $\cbar$ has $\a$-bf-type $\si$.

We now want a computable model of $T_{\a,\si}$.
Let $\KK$ be the class of all models of $T_{\a,\si}$.
We claim that $\KK^{fin}$ is c.e.
Notice that the lemma would then follow from the previous lemma.

For each $\tau\in\BFI{\a}$, let $D_\a(\tau)$ be the $\L_\a$-diagram of any tuple $(\A,\abar)$ of $\a$-bf-type $\tau$.
Notice that $D_\a(\tau)$ can be computed from $\tau$ and $\BB$ as follows:
the $\L$-part of the diagram is determined by $D((\tau)_0)$ and $\A_{(\a)}\models \bfphi_\d(\cbar,\abar)$ for $\d\in\BFI{\b}$ if and only if $\d\leq_\b (\tau)_\b$.

If $\bbar$ is a tuple in a model of $\A_{(\a)}$, then $\cbar\bbar$ has $\a$-bf-type $\tau$ for some $\tau\supseteq\si$, $\tau\in \BFI{\a}$.
Conversely, for every $\tau\supseteq\si$, there is a model $\A_{(\a)}\models T_{\a,\si}$ with a tuple of $\a$-bf-type $\tau$.
It follows that $\KK^{\fin}$ is determined by $\{ D_\a(\tau): \tau\in \BFI{\a}, \tau\supseteq \si\}$.
\end{proof}

We now prove that being the $\a$-bf-structure of $T$ is a $\Pi^1_1$ property; another lemma that will be useful later.

\begin{lemma}
The set of triples $(T,\a,\Bb)$ such that $\Bb$ is the $\a$-bf-structure of $T$ is a $\Pi^1_1$ set.
\end{lemma}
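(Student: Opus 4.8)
The goal is to show that the predicate ``$\Bb$ is the $\a$-bf-structure of $T$'' is $\Pi^1_1$ in the parameters $(T,\a,\Bb)$, where $\a$ is coded by a real coding a well-ordering and $\Bb$ is a presentation of a first-order structure in the language described in Definition \ref{def: b&f structure}. The plan is to unwind Definition \ref{def: b&f structure} clause by clause and check that each requirement can be written with only number quantifiers and universal real (set) quantifiers, so that the whole conjunction is $\Pi^1_1$. First I would observe that $\Bb$ being the genuine $\a$-bf-structure amounts to the existence of a rank-and-field-preserving isomorphism between $\Bb$ and the ``true'' $\a$-bf-structure built from $\KK=\{\A:\A\models T\}$; the subtlety is that an isomorphism is an existential real quantifier, which is the wrong side, so the main work is to replace that existential by an intrinsic characterization that avoids quantifying over a witnessing isomorphism.

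The key idea is to characterize $\Bb$ being correct by two families of conditions, each of which is $\Pi^1_1$. Soundness: every element $\si$ of $\Bb$ at level $(\b,k)$ really is realized by some tuple in some model of $T$, and the ordering, projections, $\ext$ relations, and diagram map on $\Bb$ agree with the true back-and-forth data for those realizing tuples. Completeness: every $\b$-bf-type actually occurring in a model of $T$ is named by some element of $\Bb$. For soundness one naturally wants to say ``there is $\A\models T$ and $\abar\in\A$ with $(\A,\abar)\equiv_\b\si$,'' which is $\Si^1_1$, the wrong complexity; the trick is to use Lemma \ref{lemma: building models} together with the extended theory $T^\Bb_\a$ to turn realizability into a statement about $\Bb$'s internal coherence. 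Concretely, by Lemma \ref{le: T alpha} and its corollary, if $\Bb$ internally satisfies the axioms $T^\Bb_\a$ in the appropriate sense and the diagram/ext/projection data are internally consistent, then every internal $\si$ is genuinely realized by the model that Lemma \ref{lemma: building models} builds computably from $\Bb$, so realizability becomes automatic rather than an extra existential. Thus I would write soundness as: (i) the purely combinatorial coherence of $\Bb$ — that $(\cdot)_\b$, $\pi_\iota$, $\ext_\b$, and $D$ satisfy the defining equations of Definition \ref{def: b&f structure} and the recursion for $\bfphi_\si$ — which is arithmetic (indeed recursive) in $\Bb$ and $\a$; and (ii) the faithfulness clauses, each expressible with a universal real quantifier.

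For completeness and for faithfulness the genuinely $\Pi^1_1$ (rather than arithmetic) content appears, and this is where I expect the main obstacle. The statement ``for every model $\A\models T$ and every tuple $\abar$, the $\b$-bf-type of $(\A,\abar)$ is named by some $\si\in\Bb$'' quantifies universally over reals coding $\A$, so it is $\Pi^1_1$ provided the matrix ``the $\b$-bf-type of $(\A,\abar)$ equals the type coded by some $\si$'' is itself arithmetic in $(\A,\Bb,\a)$. By Theorem \ref{thm: def bnf}, the relation $(\A,\abar)\leq_\b(\B,\bbar)$ is equivalent to containment of $\Pii_\b$-types, and using the extended-language formulas $\bfphi_\si$ one checks that ``$(\A,\abar)\equiv_\b\si$'' is expressible by a $\Pi^{c}_\b$-type agreement, hence arithmetic relative to a real coding $\a$ and uniformly in $\A$ and $\Bb$. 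The delicate point is uniformity in $\b\le\a$: because $\b$ ranges below the ordinal $\a$, the relevant hierarchy of back-and-forth relations is defined by transfinite recursion along $\a$, and I must verify that this recursion is given by a single formula that is arithmetic once $\a$ is fixed as a parameter (which it is, since $\leq_\b$ is $\Pi^0_\b$-like relative to a presentation of $\a$). Assembling the pieces, ``$\Bb$ is the $\a$-bf-structure of $T$'' is the conjunction of the arithmetic coherence conditions with finitely many clauses each of the form $\forall\, \A\,(\A\models T \implies \Phi(\A,\Bb,\a))$ with $\Phi$ arithmetic; since $\A\models T$ is itself $\Pi^1_1$ (for a $\L_{\om_1,\om}$-sentence $T$, being a model is $\Pi^1_1$), each such clause is $\forall\A\,(\Pi^1_1 \implies \text{arithmetic})$, which is $\Pi^1_1$, and a finite conjunction of $\Pi^1_1$ statements is $\Pi^1_1$. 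The main obstacle, and the step I would spend the most care on, is confirming that every ``realizability'' requirement can be discharged internally via Lemma \ref{lemma: building models} so that no existential real quantifier over witnessing models or isomorphisms survives; once that is arranged, the complexity count is routine.
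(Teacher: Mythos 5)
Your overall skeleton is the same as the paper's: the soundness half is discharged by using Lemma \ref{lemma: building models} to manufacture, computably from $\Bb$, models realizing each internal type (the paper builds such a countable list $\KK$, checks its members model $T$, and checks that $\Bb$ is the $\a$-bf-structure of that list, all of which is $\Delta^1_1$), and the only genuinely $\Pi^1_1$ clause is the completeness statement that every model of $T$ is accounted for. One caveat on your soundness step: Lemma \ref{le: T alpha} and Lemma \ref{lemma: building models} are stated for the \emph{true} bf-structure, so internal coherence of a candidate $\Bb$ alone does not make realizability ``automatic''; you still must verify that the structures built from $\Bb$ actually model $T$ and that their mutual back-and-forth relations agree with $\Bb$, which is what the paper does explicitly. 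That is a presentational gap you could fill, not a fatal one.

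The concrete error is in your final complexity count. You assert that each completeness clause has the form $\forall\A\,(\Pi^1_1 \implies \text{arithmetic})$ and that this is $\Pi^1_1$. It is not: $\neg(\Pi^1_1)\vee(\text{arithmetic})$ is $\Sigma^1_1$, and a universal real quantifier in front of a $\Sigma^1_1$ matrix yields $\Pi^1_2$ in general. For the clause to land in $\Pi^1_1$ you need the hypothesis $\A\models T$ to be $\Sigma^1_1$ (or $\Delta^1_1$) in $(\A,T)$ --- which it in fact is, since satisfaction of an $\L_{\om_1,\om}$-sentence coded by a well-founded syntax tree is defined by effective transfinite recursion on that tree and is therefore $\Delta^1_1$. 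Relatedly, your claim that ``$(\A,\abar)\equiv_\b\si$'' is \emph{arithmetic} relative to a code for $\a$ is wrong: the relations $\leq_\b$ for $\b<\a$ are defined by a transfinite recursion of length $\a$ and are only hyperarithmetic (uniformly $\Delta^1_1$) in $(\A,\Bb,\a)$. This second slip is harmless --- a $\Delta^1_1$ matrix under $\forall\A$ still gives $\Pi^1_1$ --- but the first one, as written, proves only a $\Pi^1_2$ bound; the argument needs the $\Delta^1_1$-ness of $\A\models T$, not its $\Pi^1_1$-ness, to close.
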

\begin{proof}
Let us start proving that given an ordinal $\a$, a countable list $\KK$  of $\L$-structures, and $\Bb$, deciding whether $\Bb$ is  the $\a$-bf-structure of $\KK$ is $\Delta^1_1$.
Let $\Cc$ be the $\a$-bf-structure of $\KK$.
The construction of $\Cc$ is clearly $\Delta^1_1$; we now need to check if $\BB$ is isomorphic to $\Cc$.
We do it level by level.
For $\b=0$, each $0$-bf-type corresponds to a finite diagram $D(\abar)$ occurring in $\KK$, so it is not hard to verify that $\BB$ considers them all, and to build an isomorphism between $\BFI{0}^\BB$ and $\BFI{0}^\Cc$.
Suppose now that we have an isomorphism between $\BFI{<\b}^\BB$ and $\BFI{<\b}^\Cc$, and we want to extend it to level $\b$.
For each $\si\in \BFI{\b}^\BB$, look for $\hat{\si}\in \BFI{\b}^\Cc$ such that for each $\g<\b$, $\ext_\g(\si)^\BB$ and $\ext_\g(\hat{\si})^\Cc$ are equal under the isomorphism that we have so far.
If no such $\hat{\si}$ exists, then $\BB$ is not correct.
Otherwise, this allows us to define a map from $\BFI{\b}^\BB$ to $\BFI{\b}^\Cc$, which we need to check is an isomorphism.
This procedure is clearly $\Delta^1_1$.

Now, suppose that we have a theory $T$, instead of a countable list $\KK$.
Using the previous lemma build, out of $\BB$, a list $\KK$ of models of $T^\Bb_\a$, realizing each $\a$-bf-type $\si$ in $\BFI{\a}^\Bb$.
Then check that all the models in $\KK$ satisfy $T$, and that $\Bb$ is actually the $\a$-bf-structure of $\KK$.
So far, this is still $\Delta^1_1$.
Finally, verify that every model of $T$ is $\equiv_\a$-equivalent to a model in $\KK$.
This last part is $\Pi^1_1$.
\end{proof}

\section{The main theorem} \label{se: main}


We now show that (\ref{main: part 1}) implies (\ref{main: part 3}) in Theorem \ref{thm: main}.
We will prove that  (\ref{main: part 2}) implies (\ref{main: part 1}) in Section \ref{se: coding}.

\begin{theorem} \label{thm: counter example}
Let $T$ be an $\L_{\om_1,\om}$ counterexample to Vaught's conjecture.
Then, there is an oracle $Z_0$ such that, for every $Z\geqt Z_0$ and every $\A\models T$,   
\[
\om_1^{\A,Z_0}\leq \om_1^Z \ \ \iff \ \ Z\in \Sp(\A).
\] 
\end{theorem}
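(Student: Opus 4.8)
The plan is to prove Theorem~\ref{thm: counter example} by reducing the general counterexample $T$ to a minimal one, for which Theorem~\ref{thm: minimal example} already gives exactly the desired conclusion, and then transferring the spectrum information back. Recall that by \cite[Theorem 1.5.11]{Ste78} every counterexample to Vaught's conjecture has a minimal counterexample below it, but a single minimal $T'$ need not capture the isomorphism type of an arbitrary model $\A\models T$. So a direct appeal to Theorem~\ref{thm: minimal example} is not enough, and the real work is in handling models of $T$ of \emph{high Scott rank} whose $\equiv_\a$-classes split into many pieces as $\a$ grows. The strategy is to isolate, for each model $\A$, the back-and-forth behavior that pins down its isomorphism type, and to show that this behavior is computably recoverable from any $Z$ with $\om_1^Z\geq\om_1^{\A,Z_0}$.

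First I would set up the oracle $Z_0$. Using Theorem~\ref{thm: easy version} applied to the projective scattered ranked equivalence relation $(\KK,\isom,SR)$, I get a cone on which every $Z$ computes copies of all $\A\in\KK$ with $SR(\A)<\om_1^Z$; this disposes of the low Scott rank case exactly as in the proof of Theorem~\ref{thm: minimal example}. Relativizing, I may also assume $T$ is a computable infinitary sentence, and moreover, by the Corollary following Lemma~\ref{le: T alpha}, I may replace $T$ by some $T_\a$ and assume $T$ is $\Pii_2$, so that the model-building machinery of Lemma~\ref{lemma: building models} applies: from any presentation $\Bb$ of the $\a$-bf-structure of $T$ and any $\si\in\BFI{\a,k}$, one can compute a structure realizing $\a$-bf-type $\si$. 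The key is that, by the previous lemma, the predicate ``$\Bb$ is the $\a$-bf-structure of $T$'' is $\Pi^1_1$, so on a cone the $\a$-bf-structure of $T$ is computable from any $Z$ with $\om_1^Z$ large enough; combining this with Lemma~\ref{lemma: building models}, such a $Z$ can build a model of $T$ in every $\a$-bf-type realized in $T$, for every $\a<\om_1^Z$.

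The heart of the argument is the high Scott rank case: given $\A\models T$ with $SR(\A)\geq\om_1^{\A,Z_0}=\om_1^Z$, I must show any such $Z$ computes a copy of $\A$. The plan is to follow the architecture of the proof of Theorem~\ref{thm: minimal example}: build, via Lemma~\ref{le: intermediate} relativized to the base oracle, an intermediate $G$ with $\om_1^G=\om_1^Z=\om_1^Y$ where $Y$ computes $\A$, arranging that $G\oplus Y$ and $G\oplus Z$ both have $\om_1$ equal to some fixed admissible $\a$ lying above $SR(\A)$'s relevant level. The difference from the minimal case is that I can no longer invoke a club (Lemma~\ref{le: club}) forcing all high-rank models into a single $\Sii_{<\a}$-class. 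Instead I would use the bf-structure directly: $Z$ computes the $\a$-bf-structure of $T$, hence computes, via Lemma~\ref{lemma: building models}, a model $\C$ realizing the same $\a$-bf-type as $\A$ where $\a=\om_1^Z$; since $SR(\A)\leq\om_1^\A+1=\om_1^Z+1$, the $\a$-bf-type determines $\A$ up to isomorphism. Concretely, when $\om_1^{Z\oplus(\text{copy of }\A)}=\a$, the formulas $\bfphi_\si$ become $\L^{c,\cdot}_{\om_1,\om}$ formulas, so $\Sii_{<\a}$-equivalence of $\A$ and $\C$ upgrades to isomorphism by Nadel's effective Scott theorem (Theorem~\ref{thm: effective Scott}), exactly as in the minimal proof.

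The main obstacle I expect is showing that the $\a$-bf-type of a high-Scott-rank model $\A$ at level $\a=\om_1^Z$ genuinely determines $\A$ up to isomorphism, rather than merely up to $\Sii_{<\a}$-equivalence. Unlike the minimal case, where Lemma~\ref{le: club} collapses all ambiguity, here I must argue that realizing the correct $\a$-bf-type and having $\om_1$ of the combined oracle equal to $\a$ together force $\L^{c,X}_{\om_1,\om}$-elementary equivalence, so that Theorem~\ref{thm: effective Scott} applies. This requires a careful choice of $\a$ as an admissible ordinal above the one extra level where $SR(\A)$ could exceed $\om_1^\A$, and a verification (using Lemma~\ref{le: T alpha}) that the bf-type axioms $T_\a$ control the $\Pii_\a$-theory finely enough that two models of $T$ with the same $\a$-bf-type and the same admissible $\om_1$ are $\L^{c}_{\om_1,\om}$-equivalent relative to the appropriate oracle. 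Once this is established, the spectrum identity $\Sp(\A)=\{Z:\om_1^Z\geq\om_1^{\A,Z_0}\}$ follows by combining the low-rank and high-rank cases, with the right-to-left direction being immediate as noted.
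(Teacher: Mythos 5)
Your setup (Theorem~\ref{thm: easy version} for the low Scott rank case, the intermediate real $G$ from Lemma~\ref{le: intermediate}, and Nadel's theorem to upgrade $\Sii_{<\a}$-equivalence of two $G\oplus Y$-computable models to isomorphism) matches the paper's architecture. But there is a genuine gap at the central step. You write that ``$Z$ computes the $\a$-bf-structure of $T$, hence computes, via Lemma~\ref{lemma: building models}, a model $\C$ realizing the same $\a$-bf-type as $\A$ where $\a=\om_1^Z$.'' This is unjustified and in general false: Theorem~\ref{thm: easy version}, applied to the ranked equivalence relation of bf-structures with $r(\Bb_\b)=\b$, only yields that $Z$ computes the $\b$-bf-structure for $\b$ \emph{strictly below} $\om_1^Z$. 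The $\a$-bf-structure at the level $\a=\om_1^Z$ itself has rank $\om_1^Z$ and is not $Z$-computable; this is exactly the obstruction that the high Scott rank case poses, and your proposal assumes it away. Relatedly, even granting access to that structure, identifying \emph{which} $\a$-bf-type $\A$ realizes requires access to $\A$, which $Z$ does not have --- that identification must be routed through $Y$ and $G$.

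The paper's missing ingredient is an overspill argument through a nonstandard model: take an $\om$-model $\M_{G,Y}$ of enough ZFC containing $G$, $Y$, and all ordinals below $\a$ but omitting $\a$ (Friedman/Gandy). Since $G$ computes the $\b$-bf-structure for every $\b<\a$ and $\a$ is not definable in $\M_{G,Y}$, there is a non-well-founded $\a^*\in On^{\M_{G,Y}}$ such that $G$ computes a structure $\Bb_{\a^*}$ that $\M_{G,Y}$ believes is the $\a^*$-bf-structure of $T$ (using that this is a $\Pi^1_1$, hence downward absolute, property). Inside $\M_{G,Y}$ the model $\A$ has an $\a^*$-bf-type $\si^*$, Lemma~\ref{lemma: building models} lets $G$ build $\A^*$ of type $\si^*$, and $\Sico{G\oplus Y}_{<\a}$-equivalence plus Theorem~\ref{thm: effective Scott} gives $\A^*\isom\A$; a second nonstandard model $\M_{Z,G}$ then transfers $\A^*$ to $Z$. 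Your paragraph on the ``main obstacle'' misplaces the difficulty: the upgrade from $\Sii_{<\a}$-equivalence to isomorphism is the easy part (Nadel handles it once both models are computable from an oracle whose $\om_1$ is $\a$); the hard part is producing, computably in $G$ and then in $Z$, a model with the correct back-and-forth behavior at a level those oracles cannot present as a well-order. Without the overspill device (or a substitute for it), the proof does not go through.
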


\begin{proof}
The left-to-right direction is immediate.
We prove the other direction. 

Informally, the main steps behind this proof are as follows.
First we use Theorem \ref{thm: easy version} to find an oracle relative to which we can compute the $\b$-bf-structure of $T$ whenever we can compute $\b$.
We then use an overspill argument to show that if $Z$ can compute all the $\b$-bf-structures of $T$ for $\b<\om_1^Z$, then it can compute the $\a^*$-bf-structure over some non-standard ordinal $\a^*$ with well-founded part $\om_1^Z$.
Then, given an $\a^*$-bf-type $\si^*$, we use Lemma \ref{lemma: building models} to computably build a model of $T$ with $\a^*$-bf-type $\si^*$.
Even if we cannot make sense of what having $\a^*$-bf-type $\si^*$ means, we can show using Theorem \ref{thm: effective Scott} that any two $Z$-computable structures having the same $\a^*$-bf-type must be isomorphic.
This last bit requires a trick using non-standard models of $ZFC$ to put ourselves in the hypothesis of Theorem \ref{thm: effective Scott}.

First we prove that, on a cone, for every $X$ and every $\b<\om_1^X$, $X$ computes a presentation of the $\b$-bf-structure of $T$.
Let $\Kfr$ be the union over all $\b\in\om_1$ of the class of all presentations $\Bb$ of the $\b$-bf-structure  of $T$.
Given  such a $\b$-bf-structure $\Bb$, let $r(\Bb)=\b$.
Then $(\Kfr,\isom,r)$ is a projective scattered ranked equivalence relation.
So, by Theorem \ref{thm: easy version}, there is a cone as wanted.
We relativize the rest of the proof to this cone.

Now, consider $Z\in 2^\om$ and $\A\models T$  such that $\om_1^\A\leq\om_1^Z$.
We want to show that $Z$ computes a copy of $\A$.
Let $\a=\om_1^Z$.
Let $Y$ be such that $Y$ computes a copy of $\A$ and  $\om_1^Y=\om_1^Z$.
Let $G$ be  as in Lemma \ref{le: intermediate}, i.e., $G$ satisfies that 
\[
\a= \omega_1^Z = \omega_1^{Z\oplus G} = \omega_1^{G} = \omega_1^{G\oplus Y} = \omega_1^Y.
\]

We start by proving that $G$ computes a copy of $\A$, and then a symmetrical argument will show that $Z$ computes a copy of $\A$ too.

Let $\M_{G,Y}$ be an $\omega$-model of (enough of) $ZFC$ which contains $Y$, $G$ and all the ordinals below $\a$, but omits $\a$, (the existence of such a model is due to H. Friedman (see \cite[III,1.9]{Sac90}) and follows from an application of Gandy's theorem).
For every $\b<\a$, $G$ computes a copy $\Bb_\b$ of the $\b$-bf-structure of $T$.
These $\Bb_\b$ are in $\M_{G,Y}$, and are still the $\b$-bf-structure of $T$ within $\M_{G,Y}$ because this is a $\Pi^1_1$ property and all the reals of $\M_{G,Y}$ are reals in the real universe too.
We also know that any two $\b$-bf-structures of $T$ are isomorphic, and this is true outside and inside $\M_{G,Y}$ (because it is provable in $ZFC$).
Now, since $\M_{G,Y}$ omits $\a$, there is a non-well-ordered $\a^* \in On^{\M_{G,Y}}$, such that $G$ computes a structure $\Bb_{\a^*}$, which $\M_{G,Y}$ believes is an $\a^*$-bf-structure for $T$, as otherwise $\a$ would be definable in $\M_{G,Y}$.
By the uniqueness of $\b$-bf-structures, for $\b<\a$, the initial segment of $\Bb_{\a^*}$ of length $\b$ is the correct $\b$-bf-structure of $T$.
Since $\A$ is computable from $Y$, it belongs to $\M_{G,Y}$.
In $\M_{G,Y}$, this model has some $\a^*$-bf-type $\si^*\in\BFI{\a^*}$, and hence, by Lemma \ref{lemma: building models}, $G$ can compute a model $\A^*$ with $\a^*$-bf-type $\si^*$.
Since $\A$ and $\A^*$ are $\equiv_{\a^*}$-equivalent within $\M_{G,Y}$, they are, in particular, $\Sico{G\oplus Y}_{<\a}$-elementary equivalent.
This last part still holds in the universe because being $\Sico{G\oplus Y}_{\b}$-elementary equivalent for $\b<\a$ is a $\Delta^1_1$-property.
Since both $\A$ and $\A^*$ are computable in $G\oplus Y$, they are isomorphic by Theorem \ref{thm: effective Scott}. 

An identical argument, using a model $\M_{Z,G}$ containing $G$ and $Z$, proves that $Z$ computes a copy of $\A^*$, and hence of $\A$.
\end{proof}

\begin{theorem}\label{thm: minimal spectra ->}
Let $T$ be an $\L_{\om_1,\om}$ counterexample to Vaught's conjecture.
Then, there is an oracle $Z_0$ such that, for every set $\Cfr\subseteq 2^\om$, the following are equivalent:
\begin{enumerate}
\item $\Cfr$ is the degree spectrum of a model of $T$ relative to $Z_0$.   \label{part: minimal sp 1}
\item $\Cfr=\{Z\in 2^\om: \om_1^{Z\oplus Z_0}\geq \a\}$ for some admissible ordinal $\alpha$. \label{part: minimal sp 2}
\end{enumerate}
\end{theorem}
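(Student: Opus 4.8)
The plan is to obtain Theorem \ref{thm: minimal spectra ->} as a direct consequence of Theorem \ref{thm: counter example} together with the model-existence statement of Corollary \ref{cor: Sacks 4.7}. I would take $Z_0$ to be exactly the oracle produced by Theorem \ref{thm: counter example}. Reading that theorem with $Z$ replaced by $Z\oplus Z_0\geqt Z_0$, one gets, for every $\A\models T$, a clean description of the spectrum relative to $Z_0$:
\[
\{Z\in 2^\om: Z\oplus Z_0\in\Sp(\A)\}\ =\ \{Z\in 2^\om:\om_1^{Z\oplus Z_0}\geq\om_1^{\A,Z_0}\}.
\]
Both implications will come from matching the parameter $\om_1^{\A,Z_0}$ on the right with the ordinal $\a$ in item (\ref{part: minimal sp 2}).

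For the direction (\ref{part: minimal sp 1})$\implies$(\ref{part: minimal sp 2}), given $\Cfr=\{Z:Z\oplus Z_0\in\Sp(\A)\}$ I would set $\a=\om_1^{\A,Z_0}$. The displayed formula immediately puts $\Cfr$ in the required shape, so the only thing to check is that $\a$ is admissible. This is automatic: $\om_1^{\A,Z_0}$ is the minimum of $\{\om_1^X: X\in\Sp(\A),\ X\geqt Z_0\}$, the minimum is attained by some $X$, and any $\om_1^X$ is admissible by Sacks' characterization recalled in Section \ref{se: omega 1 CK}.

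For the converse (\ref{part: minimal sp 2})$\implies$(\ref{part: minimal sp 1}), I would first note that every value $\om_1^{Z\oplus Z_0}$ is $Z_0$-admissible, so replacing $\a$ by the least $Z_0$-admissible ordinal that is $\geq\a$ (and by $\om_1^{Z_0}$ if $\a\leq\om_1^{Z_0}$) leaves $\Cfr$ unchanged; hence I may assume $\a=\om_1^X$ for some $X\geqt Z_0$. Applying Corollary \ref{cor: Sacks 4.7} to this $X$ yields a model $\A\models T$ with $SR(\A)\geq\om_1^{\A,X}=\om_1^X=\a$. Since $\A$ then has high Scott rank relative to $X$, the invariance noted in Section \ref{se: omega 1 CK} gives $\om_1^{\A,X}=\om_1^\A$; combined with $X\geqt Z_0$ this forces
\[
\om_1^\A\ \leq\ \om_1^{\A,Z_0}\ \leq\ \om_1^{\A,X}\ =\ \om_1^\A,
\]
so $\om_1^{\A,Z_0}=\a$, and the displayed spectrum formula then gives $\{Z:Z\oplus Z_0\in\Sp(\A)\}=\Cfr$.

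The bookkeeping identifying the possible values of $\om_1^{Z\oplus Z_0}$ with the $Z_0$-admissible ordinals, and the reduction of a general admissible $\a$ to a $Z_0$-admissible one, is routine. I expect the only genuinely delicate point to be verifying the \emph{exact} equality $\om_1^{\A,Z_0}=\a$ in the converse—not merely $\om_1^{\A,Z_0}\leq\a$—which is precisely where the high-Scott-rank invariance $\om_1^{\A,X}=\om_1^\A$ from Section \ref{se: omega 1 CK} does the work.
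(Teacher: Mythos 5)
Your proposal is correct and follows essentially the same route as the paper: take $Z_0$ from Theorem \ref{thm: counter example} (the paper additionally enlarges $Z_0$ so that $T$ becomes a computable infinitary sentence, which you should also do so that Corollary \ref{cor: Sacks 4.7} applies relative to $Z_0$), read one direction off the resulting spectrum description, and use Lemma \ref{lemma: Sacks 4.7}/Corollary \ref{cor: Sacks 4.7} for the other. Your extra bookkeeping---reducing an arbitrary admissible $\a$ to a $Z_0$-admissible one and pinning down the exact equality $\om_1^{\A,Z_0}=\a$ via the high-Scott-rank invariance---is precisely the detail the paper leaves implicit.
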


We note that saying that $\Cfr\subseteq 2^\om$ is the degree spectrum of $\A$ relative to $Z_0$ means that for every $Z$, $Z\in \Cfr$ if and only if $Z\oplus Z_0$ computes a copy of $\A$.
Saying that $\om_1^Z\geq\a$ relative to $Z_0$ means that $\om_1^{Z\oplus Z_0}\geq \a$.
So, the thesis of the theorem above states that $T$ satisfies (\ref{minimal main: part 2}) of Theorem \ref{thm: main} relative to $Z_0$.
It is easy to see that (\ref{minimal main: part 2})  implies  (\ref{main: part 2}).

\begin{proof}[Proof of Theorem \ref{thm: minimal spectra ->}]
We work relative to the oracle $Z_0$ given by Theorem \ref{thm: counter example}.
By increasing this oracle if necessary, we can also assume that $T$ is a computable infinitary sentence.
So, we have that, for every $Z$ and every model $\A\models T$,
\[
\Sp(\A) = \{Z\in 2^\om: \om_1^Z\geq \om_1^\A\}.
\]
This proves that (\ref{part: minimal sp 1}) implies (\ref{part: minimal sp 2}) for $\a=\om_1^\A$.
For the other direction, consider an admissible ordinal $\a$.
By Lemma \ref{lemma: Sacks 4.7}, there is a structure $\A\models T$ with $SR(\A)\geq \om_1^\A=\a$, and hence with spectrum $\{Z\in 2^\om: \om_1^Z\geq \a\}$.
\end{proof}

\subsection{Coding into the $\a$th-jump}  \label{se: coding}

Finally, we need to prove that  (\ref{main: part 2}) implies (\ref{main: part 1}).
The ideas in the proof below come from \cite[Theorem 3.1]{McountingBF}, but the setting is somewhat different.

Suppose $T$ satisfies hyperarithmetic-is-recursive relative to a cone.
Let us relativize the rest of the proof to the base of this cone.
Suppose, toward a contradiction, that $T$ has continuum many $\a$-bf-types for some $\a<\om_1$.
Let $\a$ be the least such, and, by relativizing again, let us assume that $\a$ is a computable ordinal.
Let us also assume that $\BFI{\a,0}$  has size continuum.
(This assumption is without loss of generality, as, if $\BFI{\a,k}$ has size continuum, we can add $k$ constants to $\L$ and then $\BFI{\a,0}$  has size continuum.)
Relativize again, if necessary, to make sure that $T$ has a computable $(<\a)$-bf structure.

Consider $\L_{<\a}=\bigcup_{\b<\a}\L_\b$.
As we mentioned in Section \ref{se: bf}, for every model $\A\models T$, $\A_{(<\a)}$ has a copy which is $\Delta^0_{\a}$ in the diagram of $\A$.
For each $\A$, let $t_\A$ be the finitary-$\Si_1$-theory of $\A_{(<\a)}$ (which determines the $\Sii_1$-$\L_{<\a}$-theory of $\A_{(<\a)}$, and hence the $\Sii_\a$-$\L$-theory of $\A$).
Note that every copy of $\A_{(<\a)}$ can enumerate $t_\A$.
In what follows, we view $t_\A$ as a member of $2^\om$, by considering an effective list of all the $\Si_1$-$\L_{<\a}$-sentences.

Let $R=\{t_\A: \A\in\KK\}\subseteq 2^\om$, where $\KK=\{\A:\A\models T\}$.
Since $\Sii_\a\tp_\A$ is determined by $t_\A$, we have that $t_\A=t_\B$ if and only if $\A\equiv_\a\B$.
Thus $R$ has size continuum.
Notice that $R\subseteq 2^\om$ is a $\mathbf{\Sigma}^1_1$ class, because $R$ is the image of $\KK$ under $t$, and $\KK$ and $t$ are Borel.
Since $R$ is uncountable and $\mathbf{\Sigma}_1^1$, Suslin's theorem (see \cite[Corollary 2C.3]{Mos80}) says that $R$ has a perfect closed subset $[S]$, determined by some perfect tree $S\subseteq 2^{<\om}$ (where $[S]$ is the set of paths through $S$).
In what follows, we relativize our construction to $S$, so we assume $S$ is computable.
Thinking of $S$ as an order-preserving map $2^{<\om}\to 2^{<\om}$,  for $X\in 2^\om$ we let $S(X)$ be the path through $S$ obtained as the image of $X$ under this map.

Now, let $Y$ be an oracle above all the relativizations we have done so far.
Let $X$ be hyperarithmetic in $Y$ and such that the set 
\[
Z=\{\si\in 2^{<\om}: X\geq_{lex} \si\}
\]
 is not $\Si^0_{\a}(Y)$.

$S(X)$ gives us a $\Si_1$-$\L_{<\a}$-type that is consistent with $T$ and of Turing degree $X$.
There is some $\A_{(<\a)}\in\KK$ with $\Si_1$-$\L_{<\a}$-type $t_\A=S(X)$.
Let 
\[
\hat{T} = T_\a \cup \{\psi: \psi\in S(X)\} \cup \{ \neg\psi: \psi \mbox{ a $\Si_1$-$\L_{<\a}$-sentence, } \psi\not\in S(X)\}. 
\]
Note that  $\hat{T}$ is a $\Pico{X}_2$ theory, and that the finite $\L_{<\a}$-substructures of the models of $\hat{T}$ are determined by $S(X)$.
Using Lemma \ref{lemma: building models}, we get a $X$-computable  model $\A_{(<\a)}\models\hat{T}$.
Let $\A$ be the $\L$-restriction of $\A_{(<\a)}$.
We have that $\A$ models $T$ and it is hyperarithmetic in $Y$; it remains to show that $\A$ has no copy computable in $Y$.
For any copy $\B$ of $\A$, we have $\B_{(<\a)}$ can enumerate $S(X)\subseteq\om$ and hence also the set $\{\si\in 2^{<\om}: S(X)\geq_{lex} \si\}$.
Since $S$ is computable, $\B_{(<\a)}$ can enumerate $Z$, and hence, $Z$ is $\Si^0_{\a}$ in the diagram of $\B$.
Since $Z$ was chosen not to be $\Si^0_{\a}(Y)$, we have that $\B$ cannot be $Y$-computable.


\begin{thebibliography}{Mon10}

\bibitem[AK00]{AK00}
C.J. Ash and J.~Knight.
\newblock {\em Computable Structures and the Hyperarithmetical Hierarchy}.
\newblock Elsevier Science, 2000.

\bibitem[Bar73]{Bar73}
J.~Barwise.
\newblock Back and forth through infinitary logic.
\newblock In M.~D. Morley, editor, {\em Studies in model theory}, pages 5--34.
  The Mathematical Association of America, Buffalo, N.Y., 1973.

\bibitem[Bar75]{Bar75}
Jon Barwise.
\newblock {\em Admissible sets and structures}.
\newblock Springer-Verlag, Berlin, 1975.
\newblock An approach to definability theory, Perspectives in Mathematical
  Logic.

\bibitem[BE71]{BE70}
Jon Barwise and Paul Eklof.
\newblock Infinitary properties of abelian torsion groups.
\newblock {\em Ann. Math. Logic}, 2(1):25--68, 1970/1971.

\bibitem[GM08]{GMRanked}
Noam Greenberg and Antonio Montalb\'an.
\newblock Ranked structures and arithmetic transfinite recursion.
\newblock {\em Transactions of the AMS}, 360:1265--1307, 2008.

\bibitem[Har68]{Har68}
J.~Harrison.
\newblock Recursive pseudo-well-orderings.
\newblock {\em Transactions of the American Mathematical Society},
  131:526--543, 1968.

\bibitem[HM]{HMapprox}
Kenneth Harris and Antonio Montalb\'an.
\newblock Boolean algebra approximations.
\newblock Submitted for publication.

\bibitem[HM12]{HM}
Kenneth Harris and Antonio Montalb{\'a}n.
\newblock On the {$n$}-back-and-forth types of {B}oolean algebras.
\newblock {\em Trans. Amer. Math. Soc.}, 364(2):827--866, 2012.

\bibitem[Lav71]{Lav71}
Richard Laver.
\newblock On {F}ra\"\i ss\'e's order type conjecture.
\newblock {\em Annals of Mathematics (2)}, 93:89--111, 1971.

\bibitem[Mon05]{MonEqui}
Antonio Montalb\'an.
\newblock Up to equimorphism, hyperarithmetic is recursive.
\newblock {\em Journal of Symbolic Logic}, 70(2):360--378, 2005.

\bibitem[Mon07]{MonBSL}
Antonio Montalb\'an.
\newblock On the equimorphism types of linear orderings.
\newblock {\em Bulletin of Symbolic Logic}, 13(1):71--99, 2007.

\bibitem[Mon10]{McountingBF}
Antonio Montalb\'an.
\newblock Counting the back-and-forth types.
\newblock {\em Journal of Logic and Computability}, page doi:
  10.1093/logcom/exq048, 2010.

\bibitem[Mor70]{Mor70}
Michael Morley.
\newblock The number of countable models.
\newblock {\em J. Symbolic Logic}, 35:14--18, 1970.

\bibitem[Mos80]{Mos80}
Yiannis~N. Moschovakis.
\newblock {\em Descriptive set theory}, volume 100 of {\em Studies in Logic and
  the Foundations of Mathematics}.
\newblock North-Holland Publishing Co., Amsterdam, 1980.

\bibitem[MS89]{MS89}
Donald~A. Martin and John~R. Steel.
\newblock A proof of projective determinacy.
\newblock {\em J. Amer. Math. Soc.}, 2(1):71--125, 1989.

\bibitem[Nad74]{Nad74}
Mark Nadel.
\newblock Scott sentences and admissible sets.
\newblock {\em Ann. Math. Logic}, 7:267--294, 1974.

\bibitem[Sac83]{Sac83}
Gerald~E. Sacks.
\newblock On the number of countable models.
\newblock In {\em Southeast {A}sian conference on logic ({S}ingapore, 1981)},
  volume 111 of {\em Stud. Logic Found. Math.}, pages 185--195. North-Holland,
  Amsterdam, 1983.

\bibitem[Sac90]{Sac90}
Gerald~E. Sacks.
\newblock {\em Higher recursion theory}.
\newblock Perspectives in Mathematical Logic. Springer-Verlag, Berlin, 1990.

\bibitem[Sac07]{Sac07}
Gerald~E. Sacks.
\newblock Bounds on weak scattering.
\newblock {\em Notre Dame J. Formal Logic}, 48(1):5--31, 2007.

\bibitem[Sco65]{Sco65}
Dana Scott.
\newblock Logic with denumerably long formulas and finite strings of
  quantifiers.
\newblock In {\em Theory of {M}odels ({P}roc. 1963 {I}nternat. {S}ympos.
  {B}erkeley)}, pages 329--341. North-Holland, Amsterdam, 1965.

\bibitem[Sil80]{Sil80}
Jack~H. Silver.
\newblock Counting the number of equivalence classes of {B}orel and coanalytic
  equivalence relations.
\newblock {\em Ann. Math. Logic}, 18(1):1--28, 1980.

\bibitem[Spe55]{Spe55}
Clifford Spector.
\newblock Recursive well-orderings.
\newblock {\em Journal Symbolic Logic}, 20:151--163, 1955.

\bibitem[SS88]{SS88}
Theodore~A. Slaman and John~R. Steel.
\newblock Definable functions on degrees.
\newblock In {\em Cabal {S}eminar 81--85}, volume 1333 of {\em Lecture Notes in
  Math.}, pages 37--55. Springer, Berlin, 1988.

\bibitem[Ste78]{Ste78}
John~R. Steel.
\newblock On {V}aught's conjecture.
\newblock In {\em Cabal {S}eminar 76--77 ({P}roc. {C}altech-{UCLA} {L}ogic
  {S}em., 1976--77)}, volume 689 of {\em Lecture Notes in Math.}, pages
  193--208. Springer, Berlin, 1978.

\bibitem[Vau61]{Vau61}
R.~L. Vaught.
\newblock Denumerable models of complete theories.
\newblock In {\em Infinitistic {M}ethods ({P}roc. {S}ympos. {F}oundations of
  {M}ath., {W}arsaw, 1959)}, pages 303--321. Pergamon, Oxford, 1961.

\end{thebibliography}

\end{document}